\renewcommand{\d}{{\sqrt{d}}}
\providecommand{\R}{\mathbb{R}}
\providecommand{\Z}{\mathbb{Z}}
\providecommand{\ZL}{\mathbb{Z}^d\cap\mathbb{T}_L}
\providecommand{\E}{\mathbb{E}^d_L}
\providecommand{\V}{\frac{1}{2\rho}}
\providecommand{\ud}[1]{\, \mathrm{d} #1}
\providecommand{\dx}{\ud{x}}
\providecommand{\dy}{\ud{y}}
\providecommand{\dz}{\ud{z}}
\providecommand{\oh}{\frac{1}{2}}
\providecommand{\ep}{\varepsilon}
\DeclareMathOperator{\osc}{osc} 
\newcommand{\en}[1]{\left< #1 \right>}
\newcommand{\p}[1]{\left(#1\right)}
\renewcommand{\a}[1]{\left| #1 \right|}
\renewcommand{\aa}[1]{\left\| #1 \right\|}
\renewcommand{\div}{\nabla \cdot} 
\providecommand{\grad}{\nabla}
\providecommand{\sym}{\mathrm{sym}\,}
\providecommand{\T}{\mathbb{T}_L}
\renewcommand{\gg}{\grad_x \grad_y}
\newtheorem*{definition*}{Definition}
\newtheorem{theorem}{Theorem}
\newtheorem{lemma}{Lemma}[section]
\newtheorem*{lemma*}{Lemma}
\newtheorem{remark}[lemma]{Remark}
\newtheorem{cor}[lemma]{Corollary}
\newtheorem*{example}{Example}
\title{Corrector estimates for elliptic systems with random periodic coefficients}
\author{Peter Bella\footnote{Max Planck Institute for Mathematics in the Sciences, Leipzig (Germany), email: bella@mis.mpg.de} \and Felix Otto\footnote{Max Planck Institute for Mathematics in the Sciences, Leipzig (Germany), email: otto@mis.mpg.de}
}
\begin{document}

\maketitle

\begin{abstract}
 We consider an elliptic system of equations on the torus $\left[ -\frac{L}{2}, \frac{L}{2} \right)^d$ with random coefficients $A$, that are assumed to be coercive and stationary. Using two different approaches we obtain moment bounds on the gradient of the corrector, independent of the domain size $L$. In the first approach we use Green function representation. For that we require $A$ to be locally H\"older continuous and distribution of $A$ to satisfy Logarithmic Sobolev inequality. The second method works for non-smooth (possibly discontinuous) coefficients, and it requires that statistics of $A$ satisfies Spectral Gap estimate. 
\end{abstract}



\section{Introduction}

We are interested in the homogenization of linear second order elliptic system of equations with {\it random coefficients} of the form
\begin{equation}
 - \div \p{ A\p{\frac{x}{\ep}} \nabla u_\ep(x) } = f(x).
\end{equation}
It is a well-known fact that if distribution of $A$ is stationary and ergodic, then as $\ep \to 0$ the solution $u_\ep$ converges a.s. to $u_0$ -- a solution of an elliptic system with deterministic and constant {\it homogenized} coefficient field $A_{hom}$. This was proved by Kozlov \cite{kozlov} and independently by Papanicolaou and Varadhan \cite{papvar}.\footnote{Both of these articles state their results only for equations, but their methods extends also to the systems case.} For obvious practical reasons it is important not only to show the convergence of $u_{\ep}$, but also to be able to compute $A_{hom}$. If the coefficient field $A$ is periodic and deterministic, given direction $e$ ($e$ is a vector or a matrix in the case of one equation or a system of equations, respectively), this can be done by considering the notion of a corrector $\phi$, the unique periodic solution with zero mean to the cell problem $-\div (A(y) (e + \nabla \phi(y) ) = 0$, and defining $A_{hom}$ using an expression for the energy density $e A_{hom} e = \int (e + \nabla \phi(y)) A(y) (e + \nabla \phi(y))$.\footnote{Here we assumed that $A$ is elliptic in some sense (see discussion on different notions of ellipticity below). If this was not the case, to obtain the formula for $A_{hom}$ one would need to consider the equation for the corrector over multiple of cells (see work of M\"uller \cite{muller87} for a similar result in a more general setting).} 

If $A$ is random, the corrector satisfies the equation in the whole space, and a similar formula for $A_{hom}$ holds (with the integral on the right-hand side replaced by the average over the probability space). To compute the corrector for system with random coefficients one needs to solve the equation in the whole space, which is numerically very difficult. Moreover, in some cases (for example in dimension 2) even the notion and existence of a stationary corrector is not clear. 

To remove these two possible caveats, as a proxy for the original problem we consider the case of random but periodic $A$. In this case the corrector is defined as a periodic solution on the torus with zero mean, it exists and is unique, and it is less difficult to compute it numerically (at least for few realizations of a coefficient field $A$). If $A$ is $L$-periodic, it is natural to expect that $e A_{hom} e$ can be well approximated by 
\begin{equation}\label{2}
 \frac{1}{L^d} \int_{[0,L)^d} \p{ \nabla \phi(A;x) + e} A(x) \p{ \nabla \phi(A;x) + e} \dx,
\end{equation}
where the corrector $\phi$ is an $L$-periodic solution to $-\div A (\nabla \phi(A;\cdot) + e) = 0$.\footnote{Matrix $e$ will be fixed throughout the whole paper, and so we will not explicitly write that the corrector $\phi$ depends on $e$.} Moreover, to improve the error coming from approximating $eA_{hom}e$ by \eqref{2}, we can average \eqref{2} over several realizations of $A$. In order to quantify this error, one needs to estimate variance of \eqref{2}. 
In this paper we will present two quite different techniques how to obtain such estimate. 

In contrast with the qualitative theory of homogenization of equations with random coefficients \cite{kozlov,papvar} (see also \cite{kozlov2,kunnemann} for similar results for discrete elliptic equations), where stationarity and ergodicity of statistics of coefficient fields is enough to guarantee homogenization, the quantitative theory requires stronger, quantitative version of ergodicity. Quantifying ergodicity in the form of uniform mixing condition (i.e., assuming algebraic decay of correlations), Yurinski\u\i\ \cite{yurinski86} was the first to prove the rate of convergence (though not optimal) of a solution to an elliptic equation with random coefficients to the solution of a homogenized equation. Later, together with Pozhidaev, Yurinski\u\i\ extended this result to systems of equations \cite{pozyur89}. Assuming small ellipticity contrast ratio (requirement for the Meyers estimate to hold for exponents $p = 4$), in the case of a discrete elliptic equation with diagonal coefficients, Naddaf and Spencer showed in their unpublished work \cite{naddafspencer98} the optimal rate of convergence. To our knowledge, in this setting they were the first to quantify the ergodicity of the space of coefficient fields using the {\it Spectral Gap inequality} (SG), which they derived from the Brascamb--Lieb inequality. Inspired by the work of Naddaf and Spencer, Gloria and Otto \cite{GO1} improved the result of \cite{naddafspencer98} and obtained the optimal estimates for the random error without the assumption of small ellipticity contrast. In companion article \cite{GO2}, Gloria and Otto obtained the optimal estimate also for the systematic error (see \cite{GNO2} or \cite{GO2} for the definition of the random and the systematic error). Together with Neukamm, Gloria and Otto \cite{GNO1} estimated the error between the random solution and the first two terms of the asymptotic expansion. In \cite{GNO1}, instead of (SG) they assumed 
the {\it Logarithmic Sobolev inequality} (LSI) (which is a little stronger than (SG)) and use Green function estimates obtained by Marahrens and Otto \cite{MO}. 

Though most of the previous results were proved for discrete elliptic equations, we believe it should be possible to use similar methods to extend some of these results also to the case of linear elliptic equations in $\mathbb{R}^d$. On the other hand, since most of the previous arguments are based on the regularity theory for scalar elliptic operators, connected with names of De Giorgi, Nash, and Moser (often based on maximum principle, which is not available for systems of equations or discrete equations with non-diagonal coefficients), to treat the case of systems of elliptic equations one needs to use different methods. In a recent work, Ben Artzi, Marahrens, and Neukamm \cite{AMN} obtained estimates on the gradient and second mixed gradient of the Green function for discrete elliptic equation with non-diagonal coefficient. These estimates, used together with  Logarithmic Sobolev inequality and the Spectral Gap inequality, allow them to get estimates on the gradient of the corrector and the corrector itself, respectively. Since in their setting the maximum principle does not hold, there is a hope their methods could be used to study homogenization of a system of equations in $\Z^d$ as well. 

In the nonlinear setting, the only known quantitative result for homogenization of convex integral functionals is the recent work of Armstrong and Smart \cite{armstrongsmart2014}, who extended the qualitative result due to Dal Maso and Modica \cite{dalmasomodica}. Armstrong and Smart used clever cut and paste technique, which for any two open sets $U,V \subset \R^d$, separated by distance at least $1$, requires the statistics of coefficient fields $A$ in $U$ to be independent of the statistics of $A$ in $V$. This assumption replaces the Spectral Gap inequality (or the Logarithmic Sobolev inequality) used in the previously mentioned articles. As a special case (using their result for quadratic functionals), they prove homogenization also for linear elliptic equations. In contrast with our approach they used variational techniques, which in the linear setting would require coefficients $A$ to be symmetric, while we do not need this assumption. The result in \cite{armstrongsmart2014} is stated for scalar functionals, but using their methods it should be possible to extend the result also to the case of nonlinear convex vectorial functionals. 

We assume the statistics of coefficient fields is stationary, meaning that $A$ and $A(z+\cdot)$ have the same distribution. In our and also in many already mentioned works there are basically two main assumptions besides stationarity: the first one is deterministic, and assumes that coefficient fields $A$ are in some sense elliptic; the latter is probabilistic, and asserts that the distribution of coefficient fields is ergodic in a quantitative way. 

In contrast with scalar equations, where there are not many different notions of ellipticity, for systems there are several possible choices. A stronger condition, called {\it very strong ellipticity} (also known as the Legendre condition), assumes $M\cdot A(x)M \ge \lambda \a{M}^2$ and $\a{A(x)M} \le \a{M}$, uniformly in $x$ and for all matrices $M$. A weaker notion of ellipticity is the one of {\it strong ellipticity} (also known as the Legendre-Hadamard condition), where the first inequality is assumed only for {\it rank-1} matrices $M$. In both of these, $\lambda \in (0,1)$ is fixed. In the case of the whole space (or a torus), it is obvious that very strongly elliptic $A$ satisfies
\begin{equation}\label{discA}
 \int \nabla \varphi(x) \cdot A(x) \nabla \varphi(x) \dx \ge \lambda \int \a{ \nabla \varphi(x) }^2 \dx
\end{equation}
for any $\varphi \in W^{1,2}$. If $A$ is only strongly elliptic but constant (i.e., it does not depend on $x$), \eqref{discA} still holds. This can be seen from Plancherel Theorem and the fact that in the Fourier space $\nabla \phi$ is a rank-1 matrix. For general non-constant strongly elliptic $A$ \eqref{discA} fails to hold.\footnote{For a uniformly continuous strongly elliptic $A$, Garding's inequality implies weaker version of \eqref{discA} with added multiple of $\int \varphi(x)^2 \dx$ on the left-hand side.} On the other hand, it can be proved that if $A$ satisfies \eqref{discA}, then for a.e. $x$ (for every $x$ if $A$ is continuous) the matrix $A(x)$ is strongly elliptic. Hence we see that \eqref{discA} lies between the strong ellipticity and the very strong ellipticity. In both of our approaches we assume that all coefficient fields satisfy \eqref{discA}. 

The randomness of coefficient fields $A \in \Omega$ will be modelled by a probability measure on $\Omega$. Following convention in statistical mechanics, we call this probability measure an {\it ensemble} and denote by $\en{ \cdot }$ the expectation with respect to this measure (the {\it ensemble average}). To quantify ergodicity of the ensemble we assume it satisfies either Logarithmic Sobolev inequality (LSI) (see Theorem \ref{thm1}) or Spectral Gap inequality (SG) (see Theorem \ref{thm2}). Since there are several versions of these inequalities, let us quickly discuss few of them. In the discrete setting $\Z^d$, for simplicity in the case $\Omega = \left\{ a = \textrm{diag}(a_1,\ldots,a_d) \in \R^{d\times d} : \lambda \le a_i \le 1 \right\}^{\Z^d}$, one possible form of (SG) is the following: there exists $\rho$ such that for any random variable $\zeta \in L^2(\Omega)$:
\begin{equation}\label{SGv1}
 \en{ \p{ \zeta - \en{\zeta} }^2 } \le \frac{1}{\rho} \sum_{y \in \Z^d} \en{ \p{ \frac{\partial \zeta}{\partial a(y)} }^2 }.
\end{equation}
In this form, (SG) was considered by Naddaf and Spencer in their unpublished work \cite{naddafspencer98}, and can be seen as a Poincar\'e estimate in the infinite dimensional setting. By replacing $\p{ \frac{\partial \zeta}{\partial a(y)} }^2$ on the right-hand side with different terms, Gloria and Otto \cite{GO1,GO2}, Gloria, Otto, and Neukamm \cite{GNO2}, and Ben-Artzi, Marahrens, and Neukamm \cite{AMN} considered several, weaker notions of \eqref{SGv1}. In particular, in \cite[equation (10)]{AMN}, this term is replaced by $\p{ \osc_{a(y)} \zeta }^2$, where $\osc_{a(y)}$ stands for the oscillation w.r.t.  $a(y)$:
\begin{multline}\label{osc}
 \osc_{a(y)} \zeta := \sup\left\{ \zeta(\tilde a)\ |\ \tilde a \in \Omega \textrm{ s.t. } \tilde a(x) = a(x)\ \forall x\neq y \right\}
\\ - \inf\left\{ \zeta(\tilde a)\ |\ \tilde a \in \Omega \textrm{ s.t. } \tilde a(x) = a(x) \ \forall x\neq y \right\}.
\end{multline}
Compared to \eqref{SGv1}, (SG) with oscillation on the right-hand side holds for more general ensembles -- for example for i.i.d. associated with a single-site distribution that only assumes finite number of values (Bernoulli). 

A little stronger notion than (SG) is the Logarithmic Sobolev inequality (LSI) (see, e.g., \cite[Theorem 4.9]{notesLSI}, for the proof that (LSI) implies (SG)). In the discrete setting that would mean considering \eqref{SGv1} with the left-hand side $\en{ \p{ \zeta - \en{\zeta} }^2 }$ replaced by $\en{ \zeta^2 \log \frac{\zeta^2}{\en{\zeta^2}} }$. As was the case for (SG), in (LSI) one could also use different versions of the right-hand side. In our first result we assume the ensemble satisfies (LSI) with continuum derivatives on the right-hand side, while in the second result we will assume (SG) with oscillations on the right-hand side (see Theorem \ref{thm1} and Theorem \ref{thm2} for the precise form of these assumptions).

In the next section we define all the relevant notions, state our main results (Theorem \ref{thm1} and Theorem \ref{thm2}), and quickly discuss their proofs. In Section \ref{sec:disc} we state and prove Theorem \ref{thm3} -- a discrete version (both in terms of the statement and the idea of the proof) of Theorem \ref{thm1}. In Section \ref{sec:pf1} we present the main ingredients in the proof of Theorem \ref{thm1}, and afterwards (Section \ref{sec:pf1+}) we give arguments for those. Finally, the proof of Theorem \ref{thm2} is given in Section \ref{sec:pf2}. 

{\bf Notation.} We denote by $d \ge 2$ the dimension of the underlying space $\R^d$, by $n \ge 1$ number of equations, and by $L > 0$ the side-length of the $d$-dimensional torus $\T = \R^d / L\Z^d$. Given $r>0$ and $z \in \T$, $B_r(z)$ denotes a ball in $\T$, centered at $z$ with radius $r$; $B_r$ will stand for a ball $B_r(0)$. Here and throughout the paper, balls like $B_r(z)$ refer to the distance function on the torus. 

Though we consider a system of equations, it would be convenient to use scalar notation. For that we consider $Y$, a real Hilbert space of $\dim Y = n$ (sometimes we will identify $Y$ with $\R^n$). We denote by $zy$ and $z\cdot y$ respectively the inner product in $Y$ and the natural one induced over $Y^d$. In the same spirit we write $\a{z} = \p{zz}^{\frac{1}{2}}$ for $z \in Y$ and $\a{y} = \p{ y \cdot y }^{\frac{1}{2}}$ for $y \in Y^d$. For $y \in Y, u \in \R^d$, we denote by $y \otimes u \in Y^d$ the usual tensor product.

\section{Setting and the main results}\label{sec:def}

We start by introducing the relevant deterministic notions: The corrector $\phi(A;\cdot)$ and the homogenized coefficient $A_{hom}(A)$ for an arbitrary coefficient field $A$ on the torus $\T$.

\begin{definition*}[\sc Space of coefficient fields] 
Let $\Omega$ be the space of all $L^d$-periodic fields $A : \T \to \mathcal{L}(Y^d,Y^d)$ that are elliptic in the following sense: there exists $0 < \lambda < 1$, which is fixed throughout the paper, such that for any $A \in \Omega$ 
 \begin{equation}\label{ellipticity}
  \begin{aligned}
  \forall \varphi \in W^{1,2}(\T,Y):& \quad \int_{\T} \nabla \varphi(x) \cdot A(x) \nabla \varphi(x) \dx \ge \lambda \int_{\T} \a{ \nabla \varphi(x) }^2 \dx, 
   \\ \forall x \in \T, y \in Y^d:& \quad \a{ A(x) y } \le \a{y}.
  \end{aligned}
 \end{equation}
\end{definition*}

We point out that we do not assume {\it symmetry} of coefficient fields $A$. Since the first condition in \eqref{ellipticity} is not pointwise, let us mention few sufficient pointwise conditions on $A$ for \eqref{ellipticity} to hold. Recall that it is enough to assume that $A$ is very strongly elliptic in the sense that $y\cdot A(x)y \ge \lambda \a{y}^2$ for all $y \in Y^d$. Motivated by linear elasticity, in the case $d=n$ it follows from Korn's inequality that it is enough to assume that $A$ is {\it Korn-elliptic} in the sense that $y\cdot A(x)y \ge \lambda \a{\sym y}^2$ for all $y \in Y^d$. Here, we identified $Y^d$ with the space of matrices $\R^{d\times d}$, and by $\sym$ we denoted the symmetric part of a matrix. Finally, Pozhidaev and Yurinski\u\i\ \cite{pozyur89} gave a condition which generalizes both of these conditions: they assume that for each $A \in \Omega$ there exists $\bar A \in \mathcal{L}(Y^d,Y^d)$ such that $y \cdot A(x) y \ge y \cdot \bar A y$ for all $y \in Y^d, x \in \T$, and $\bar A$ is strictly rank-1 elliptic in the sense that $(y\otimes u) \cdot \bar A (y \otimes u) \ge \lambda \a{y}^2 \a{u}^2$ for all $y \in Y, u \in \R^d$. 

Let us now define the notion of a corrector:

\begin{definition*}[\sc Corrector] For given coefficient field $A\in\Omega$, the corrector $\phi(A;\cdot) : \T \to Y$ is the unique solution of 
\begin{equation}\label{T.3}
-\nabla\cdot A(\nabla\phi(A;\cdot)+e)=0\; \textrm{ in } \T \quad \mbox{and}\quad\int_{\T}\phi(A;x) \dx=0,
\end{equation}
where $e \in Y^d$ with $|e|=1$ is a ``direction'' that is fixed throughout the paper. We note that the uniqueness of $\phi$ implies ``stationarity'' in the sense of
\begin{equation}\label{L2.11}
\phi(A(\cdot+z),x)=\phi(A,x+z).
\end{equation}
\end{definition*}

\begin{definition*}[\sc Homogenized coefficient] Given $A \in \Omega$, the homogenized coefficient in direction $e$ is defined via
\begin{equation}\label{T.2}
A_{hom}(A)e:=L^{-d}\int_{\T}
A(x)(\nabla\phi(A;x)+e) \dx.
\end{equation}
\end{definition*}

We will assume that the probability measure (the {\it ensemble}) on $\Omega$ is {\it stationary} in the following sense:
\begin{definition*}[\sc Stationary ensemble] We say that an ensemble on $\Omega$ is stationary if for any shift vector $z\in\R^d$ the random field $A$ and its shifted version $A(\cdot+z)\colon x\mapsto A(x+z)$ have the same distribution.
In other words, for any (integrable) function $\zeta\colon\Omega\rightarrow\mathbb{R}$ 
(which we think of as a random variable) 
we have that $A\mapsto \zeta(A(\cdot+z))$ and $\zeta$ have the same expectation:
\begin{equation}\label{stat}
\en{ \zeta(A(\cdot+z))}=\en{\zeta}.
\end{equation}
\end{definition*}

As before, $\en{ \cdot }$ denotes expectation w.r.t. the ensemble on $\Omega$. Assuming the ensemble is stationary and ergodic in a quantitative way (see below for precise definitions), we prove a Central Limit Theorem-type scaling of the variance of the homogenized coefficients in terms of the system volume $L^d$
\begin{equation}\nonumber
\en{ \left(e_0 \cdot A_{hom} e_1 -\en{ e_0 \cdot A_{hom} e_1}\right)^2}
\le C L^{-d}, 
\end{equation}
for any $\a{e_0},\a{e_1} \le 1$, where $C$ does not depend on $L$ or choice of $e_0,e_1$. 
This estimate is a consequence of the moment bounds on the gradient of the corrector, which we obtain using two different methods. 
Let us now precisely state the results:

\begin{theorem}\label{thm1}
 Let $\alpha \in (0,1]$, $H > 0$, and let $\en{\cdot}$ be stationary and such that $\en{\cdot}$-a.e. coefficient field $A$ is locally $\alpha$-H\"older continuous with constant $H$, meaning that for all $x,y \in B_1$
 \begin{equation}\label{Asmooth}
  \a{A(x) - A(y)} \le H \a{ x-y }^\alpha.
 \end{equation}
Moreover, we assume the ensemble satisfies the following form of the Logarithmic Sobolev inequality: there exists constant $\rho$ such that for any random variable $\zeta \in L^2(\Omega)$, for which the right-hand side in \eqref{1LSI} makes sense, we have
 \begin{equation}\label{1LSI}
  \en{ \zeta^2 \log \zeta^2 } - \en{\zeta^2} \log \en{\zeta^2} \le \V \en{ \int_{\T} \p{ \int_{B_1(z)} \a{\frac{\partial \zeta}{\partial A(y)}} \dy }^2 \dz },
 \end{equation}
 where $\frac{\partial \zeta}{\partial A(\cdot)} \in L^1(\T;\mathcal{L}(Y^d,Y^d))$ denotes the G\^ateaux derivative 
in the sense that 
\begin{equation}\label{frechet}
 \int_{\T} \frac{\partial \zeta}{\partial A(y)} B(y) \dy = \lim_{\delta \to 0+} \frac{\zeta\p{A+\delta B} - \zeta(A)}{\delta}
\end{equation}
for any sufficiently {\it smooth} $B \in L^\infty\p{\T;\mathcal{L}\p{Y^d,Y^d}}$ such that $A + \delta B \in \Omega$ for sufficiently small $\delta > 0$. 
Then for any $e \in Y^d, \a{e} \le 1$ and any $2 \le p < \infty$ we have
\begin{equation}\label{1moment}
 \en{ \a{ \nabla \phi(0) + e}^p }^\frac{1}{p} \le C\exp(C\frac{p-2}{\rho}),
\end{equation}
where the corrector $\phi$ is the unique meanfree solution to $-\nabla \cdot A (\nabla \phi + e ) = 0$. For any $e_i \in Y^d$, $\a{e_i} \le 1$, $i=0,1$ we estimate variance of the homogenized coefficient
\begin{equation}\label{1error}
\en{ \left(e_0 \cdot A_{hom}e_1-\en{ e_0 \cdot A_{hom}e_1}\right)^2}
\le C(d,n,\lambda,\alpha,H,\rho) L^{-d}.
\end{equation} 
\end{theorem}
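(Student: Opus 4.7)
The plan is to deduce the variance bound \eqref{1error} from the moment bound \eqref{1moment} by a dual-corrector argument, and to prove \eqref{1moment} itself by applying (LSI) to a pointwise functional of $\grad\phi$, the key analytic input being an annealed Green function estimate that is made available by the H\"older hypothesis \eqref{Asmooth}.

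For \eqref{1error}: since (LSI) implies (SG) with constant $1/\rho$ and the same right-hand side, apply (SG) to $\zeta(A):=e_0\cdot A_{hom}(A)e_1$. Introducing the dual corrector $\phi^*(A;\cdot)$ associated with $A^T$ and direction $e_0$, one may rewrite
\begin{equation*}
e_0\cdot A_{hom}e_1 = L^{-d}\int_{\T}\p{\grad\phi^*(A;y)+e_0}\cdot A(y)\p{\grad\phi(A;y)+e_1}\dy,
\end{equation*}
and the usual computation (differentiating in $A$, then using the two corrector equations to kill the cross-terms) gives
\begin{equation*}
\frac{\partial\zeta}{\partial A(y)} = L^{-d}\p{\grad\phi^*(y)+e_0}\otimes\p{\grad\phi(y)+e_1}.
\end{equation*}
Inserting into (SG), using Jensen on $B_1(z)$ to pass from $L^1$ to $L^2$, Fubini to swap the $y$- and $z$-integrals, and stationarity to collapse the remaining $y$-integral, one obtains
\begin{equation*}
\mathrm{Var}\p{e_0\cdot A_{hom}e_1}\le \frac{C}{\rho L^d}\en{\a{\grad\phi^*(0)+e_0}^2\a{\grad\phi(0)+e_1}^2},
\end{equation*}
and Cauchy--Schwarz combined with \eqref{1moment} at $p=4$ applied to both $\phi$ and $\phi^*$ closes the argument.

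For \eqref{1moment}: the assumption \eqref{Asmooth} together with interior Schauder regularity guarantees that $\grad\phi(A;\cdot)$ is $\en{\cdot}$-a.s.\ continuous, so $\zeta_p(A):=\a{\grad\phi(A;0)+e}^p$ is well-defined pointwise. Differentiating the corrector equation $-\div A(\grad\phi+e)=0$ in $A$ and inverting via the Green function $G(A;x,y)$ of $-\div A\grad$ on $\T$ yields the identity
\begin{equation*}
\frac{\partial \grad_x\phi(x)}{\partial A(y)} = \gg G(A;x,y)\p{\grad\phi(y)+e},
\end{equation*}
and hence the pointwise control
\begin{equation*}
\a{\frac{\partial \zeta_p}{\partial A(y)}}\le p\,\a{\grad\phi(0)+e}^{p-1}\,\a{\gg G(0,y)}\,\a{\grad\phi(y)+e}.
\end{equation*}
Plugging this into (LSI) applied to $\zeta_{p/2}$, using Cauchy--Schwarz inside $B_1(z)$ and Fubini, the right-hand side is reduced to a weighted convolution of $\a{\gg G}$ against $\a{\grad\phi+e}$, which we control by an annealed Meyers-type bound on $\gg G$ (in the spirit of Marahrens--Otto \cite{MO} and Ben Artzi--Marahrens--Neukamm \cite{AMN}). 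This converts (LSI) into a log-Sobolev-type differential inequality in $p$ for $M(p):=\en{\zeta_p^2}^{1/(2p)}$, whose Herbst-like iteration from the base case $p=2$ (which follows from testing the corrector equation against $\phi$ and using \eqref{ellipticity}) yields the exponential-in-$p$ bound $\en{\a{\grad\phi(0)+e}^p}^{1/p}\le C\exp(C(p-2)/\rho)$.

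The main obstacle is precisely the annealed estimate on $\gg G$: in the scalar case one can invoke Nash--Aronson-type pointwise bounds on $\grad G$, but for systems these are not available deterministically, so the Green function estimate has to be proved in a stochastic norm and is genuinely coupled to the moment bound being derived, rather than standing as an independent input. The role of the H\"older hypothesis \eqref{Asmooth} is exactly to supply interior Schauder regularity, which both makes $\grad\phi(0)$ evaluable pointwise and pushes the Meyers exponent strictly above $2$; this is the point at which the approach of Theorem \ref{thm1} diverges from the oscillation-based argument of Theorem \ref{thm2}, the latter dispensing with any smoothness of $A$ at the cost of weaker closure in $p$.
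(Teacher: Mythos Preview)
Your treatment of \eqref{1error} via the dual corrector and (SG) is essentially the paper's argument (Steps 2--4 of the proof of Theorem~\ref{thm1}), so that part is fine.

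The moment bound \eqref{1moment}, however, has a genuine gap, and your diagnosis of the obstacle is off. You take $\zeta_p=|\nabla\phi(0)+e|^p$, which forces the vertical derivative to contain $\gg G(0,y)$. But $|\gg G(0,y)|\sim |y|^{-d}$ near $y=0$ (already for constant $A$), so $\int_{B_1(z)}|\gg G(0,y)|\,|\nabla\phi(y)+e|\dy$ is infinite for $z$ near $0$, and no Cauchy--Schwarz or weighted splitting repairs this: the weight needed to tame $\gg G$ near the diagonal (cf.\ Corollary~\ref{cor1}) leaves a non-integrable $|y|^{-d-\ep}$ against $|\nabla\phi+e|^2$. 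The paper avoids this by working not with the pointwise value but with the averaged quantity
\[
\zeta=\Big(\int_{B_1}(u(x)-u(0))^2\dx\Big)^{1/2},\qquad u(x)=e\cdot x+\phi(x),
\]
whose vertical derivative involves $\nabla_yG(x,y)-\nabla_yG(0,y)$ (one derivative fewer in $x$), which is then rewritten as a line integral of $\gg G(x',y)$ over a path from $0$ to $x$; the extra integration in $x'$ is precisely what makes the near-diagonal contribution finite (Lemma~\ref{lm2}, Step~3). Only \emph{after} closing the Herbst argument for this averaged $\zeta$ does Schauder give $|\nabla u(0)|\lesssim\zeta$ and hence \eqref{1moment}.

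Separately, your claim that the $\gg G$ estimate must be annealed and ``genuinely coupled to the moment bound'' is a misconception. The paper's Lemma~\ref{lm1} is a \emph{deterministic} bound
\[
R^d\int_{|x-y|>R}|\gg G(x,y)|^2\dx\le C(d,n,\lambda,\alpha,H),
\]
proved by mollifying $G$ in $y$, testing the equation against itself (as in the discrete Step~2 of Theorem~\ref{thm3}), and then using \emph{Schauder} regularity---this is exactly where \eqref{Asmooth} enters---to control the difference between the mollified and unmollified $\gg G$. No stochastic input is needed; the Green function estimate stands as an independent ingredient, and the H\"older hypothesis serves to make Lemma~\ref{lm1} deterministic rather than merely to ``push the Meyers exponent above $2$''.
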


Unless stated otherwise, here and in the sequel $C$ stands for a generic constant that only depends on the ellipticity ratio $\lambda$, dimension $d$, number of equations $n$, 
$\alpha,H$ related to the smoothness of coefficients, and possibly constant $\rho$ in the (LSI) or (SG).

The proof of Theorem \ref{thm1} is based on Green function representation. To show \eqref{1moment}, by Schauder theory it is enough to estimate $\en{ \zeta^p }$ for $\zeta = \p{ \int_{B_1} |\nabla \phi(x) + e|^2 \dx }^\oh$. We observe that $p \frac{\ud}{\ud p} \en{ \zeta^p } = \en{ \zeta^p \ln \zeta^p }$ and estimate the right-hand side using \eqref{1LSI} (so called Herbst argument). To estimate the right-hand side of \eqref{1LSI} for $\zeta^p$, we express $\frac{\partial \zeta}{\partial A(y)}$ using Green function. To deal with this we estimate the $L^2$-norm of the second mixed derivative of the Green function away from its singularity. In the end we obtain a differential inequality for $F(p) := \en{ \zeta^p }$ in the form $p \frac{\ud}{\ud p} F(p) \le F(p) \ln F(p) + CF(p)$, which together with the fact that by simple energy estimates $F(2) \le C$ gives (exponential) control on $F(p)$. To show \eqref{1error}, we appeal to the Spectral Gap estimate for $\zeta := e_0 \cdot A_{hom} e_1$ and use bound on $\en{ \a{ \nabla \phi(0) + e }^4 }$. 

Let us now provide one example of a stationary ensemble on $\Omega$, which satisfies assumptions of Theorem \ref{thm1} in the case $d=n$, and is of particular practical interest:

\begin{example}[Random linear elasticity]
 We consider an elastic material with stiffer inclusions positioned randomly in the sample. 
%
 More precisely, let the torus size $L$ be an integer, $0 < \lambda < \frac{1}{2}$, and let $\eta : \T \to [0,1]$ be a Lipschitz mask with support in $B_1$ ($\eta$ describes the stiffness of the inclusion). For each grid point $z \in \ZL$, let $s(z) \in [0,1)^d$ be a random shift vector, chosen independently and uniformly. In other words, consider $\bar\Omega := \p{[0,1)^d}^{\ZL}$ with a probability measure $m$ obtained as a product of uniform measures on $[0,1)^d$. It is known that the uniform (Lebesgue) measure on the unit cube satisfies (LSI), i.e., for any $\bar\zeta \in C^1([0,1)^d)$ with $\int_{[0,1)^d} \bar\zeta^2 \dx = 1$ we have 
\begin{equation}
 \int_{[0,1)^d} \bar\zeta^2 \log \bar\zeta^2 \dx \le \frac{1}{2\rho} \int_{[0,1)^d} \a{ \nabla \bar\zeta }^2 \dx
\end{equation}
(it is also known that $\pi \le \rho \le \pi^2$, see, e.g. \cite{compris}). By the tensorization property (see, e.g., book by Ledoux \cite[Proposition 4.4]{ledouxbook}) the measure $m$ also satisfies (LSI) with the same constant $\rho$. 

 
 We use $m$ to define a probability measure on $\Omega$. Since in this example $d=n$, we can identify $Y^d$ with square matrices $\R^{d \times d}$. 
 For an element of $\bar \Omega$ we define $A$ using 
 \begin{equation}\label{defA}
  A(x)y := \max\p{2\lambda,\max_{z \in \ZL} \eta(x-(s(z)+z))} \sym y,
 \end{equation}
 where $\sym y$ denotes the symmetric part of the matrix $y$. We see from \eqref{defA} that $y  \cdot A y \ge 2\lambda \a{ \sym y }^2$, and using Korn's inequality we get that $A$ satisfies the first condition in \eqref{ellipticity}. The boundedness of $A$ (second condition in \eqref{ellipticity}) trivially follows from \eqref{defA}. 
  
 We observe that the push-forward of $m$ using the above-defined map defines a stationary probability measure on $\Omega$. Since $m$ satisfies (LSI), we get for any (smooth) $\zeta \in L^2(\Omega)$, $\en{ \zeta^2 }=1$:
 \begin{equation}\nonumber
  \begin{aligned}
  \en{ \zeta^2 \log \zeta^2 } &= \int_{\bar \Omega} \bar\zeta^2 \log \bar\zeta^2 \ud m \overset{\textrm{LSI for } m}\le \frac{1}{2\rho} \int_{\bar \Omega} \sum_{z\in\ZL} \a{ \frac{ \partial \bar\zeta}{\partial s(z)} }^2 \ud m  
  \\ &\lesssim \en{ \sum_{z \in \ZL} \p{ \int_{B_{1+\sqrt{d}}(z)} \a{ \frac{\partial \zeta}{\partial A(x)} } \dx }^2 } 
  \\ &\lesssim \en{ \int_{\T} \p{ \int_{B_{2+\sqrt{d}}(z)} \a{ \frac{\partial \zeta}{\partial A(x)} } \dx }^2 \dz }
  \lesssim \en{ \int_{\T} \p{ \int_{B_1(z)} \a{ \frac{\partial \zeta}{\partial A(x)} } \dx }^2 \dz },
  \end{aligned}
 \end{equation}
 where $\bar \zeta$ is a composition of $\zeta$ with the map defined in \eqref{defA}. We showed that this model satisfies all assumptions of Theorem \ref{thm1}.  
\end{example}

In Theorem \ref{thm2} we obtain a similar conclusion as in Theorem \ref{thm1}, but under weaker assumptions and using Green function-free approach. In fact, we do not assume any smoothness assumption on coefficient fields $A \in \Omega$, and instead of Logarithmic Sobolev inequality \eqref{1LSI} we assume that the ensemble satisfies  Spectral Gap inequality \eqref{SG}, which is weaker than \eqref{1LSI}. To simplify the exposition we assume the torus size $L$ is an integer:

\begin{theorem}\label{thm2}
There exists $q=q(\lambda,d)$, $1 < q < 2$ with the following property. Let $\en{\cdot}$ be  stationary and satisfies the following form of the Spectral Gap inequality: there exists $\rho > 0$ such that for any $\zeta \in C_b(\Omega)$ (bounded and continuous w.r.t. the $L^\infty$-topology on $\Omega$)
 \begin{equation}\label{SG} 
  \en{ \p{ \zeta - \en{\zeta} }^2 } \le \frac{1}{\rho} \en{ \p{ \sum_{z \in \ZL} \p{ \osc_{B_\d(z)} \zeta}^q }^{\frac{2}{q}} }.
 \end{equation}
 Then
\begin{equation}\label{2moment}
 \en{ \p{ \int_{B_{\sqrt{d}}} |\nabla \phi + e|^2}^p } \le C(d,n,\lambda,p,\rho).
\end{equation}
If the ensemble satisfies \eqref{SG} with $q=2$, then also
\begin{equation}\label{2error}
\en{ \left(e_0\cdot A_{hom}e_1-\en{ e_0\cdot A_{hom}e_1}\right)^2}
\le C(d,n,\lambda,\rho) L^{-d}
\end{equation} 
for any $e_i \in Y^d$, $\a{e_i} \le 1$, $i=0,1$. 
\end{theorem}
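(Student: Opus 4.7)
The plan is to establish the moment bound \eqref{2moment} by bootstrapping moments of $F_0:=\int_{B_\d}\a{\nabla\phi+e}^2$ through \eqref{SG} and Meyers' reverse-Hölder inequality, and then to derive \eqref{2error} from a single application of \eqref{SG} with $q=2$ to the scalar $\zeta:=e_0\cdot A_{hom}(A)e_1$, using the moment bounds in hand. Throughout, the exponent $q<2$ in \eqref{SG} plays the role of the dual Meyers reverse-Hölder exponent for $-\div A\nabla$ on $\T$: its conjugate $q'=q/(q-1)>2$ is the integrability that Meyers supplies for the gradient of solutions with $L^{q'}$ source, and this is what lets the sum over $z\in\ZL$ close in terms of moments strictly lower than the one being estimated.

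For \eqref{2moment}, the base case $p=1$ is the standard one-line energy estimate: testing $-\div A(\nabla\phi+e)=0$ against $\phi$ and using \eqref{ellipticity} gives $\int_\T\a{\nabla\phi+e}^2\le CL^d$, so by stationarity $\en{F_0}\le C$, and the same yields the deterministic bound $F_0\le CL^d$ so that $F_0^{p/2}\in C_b(\Omega)$ is admissible in \eqref{SG}. Inductively assuming $\en{F_0^r}\le C$ for all $r<p$, apply \eqref{SG} to $\zeta=F_0^{p/2}$, observing $\en{F_0^{p/2}}^2\le\en{F_0^{p-1}}\en{F_0}\le C$ by Cauchy--Schwarz. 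To bound the oscillation, pick $A'\in\Omega$ agreeing with $A$ off $B_\d(z)$, set $v:=\phi(A')-\phi(A)$; then $-\div A'\nabla v=\div\tilde g$ with $\tilde g:=(A'-A)(\nabla\phi(A)+e)$ supported on $B_\d(z)$, and the elementary identity $F_0(A')-F_0(A)=\int_{B_\d}\nabla v\cdot(\nabla\phi(A')+\nabla\phi(A)+2e)$ together with $|a^{p/2}-b^{p/2}|\le\tfrac{p}{2}\max(a,b)^{(p-2)/2}|a-b|$ yields
\[
\osc_{B_\d(z)}F_0^{p/2}\le C(p)\p{\sup_{A'}F_0(A')}^{(p-1)/2}\aa{\nabla v}_{L^2(B_\d)}.
\]
The factor $\sup_{A'}F_0(A')$ is controlled by $C(F_0+F_z)$ (where $F_z:=\int_{B_\d(z)}\a{\nabla\phi(A)+e}^2$), which follows from the energy estimate $\aa{\nabla v}_{L^2(\T)}^2\le CF_z$ applied on a slightly thickened concentric ball where $A'=A$.

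The crucial step is to bound $\aa{\nabla v}_{L^2(B_\d)}$ so that $\sum_z$ closes. For $\eta$ with $\aa{\eta}_{L^2(B_\d)}\le 1$ I introduce $\psi_\eta$ solving $-\div A'^*\nabla\psi_\eta=\div(\chi_{B_\d}\eta)$ with zero mean; testing the two PDEs against each other gives $\int_{B_\d}\nabla v\cdot\eta=\int_{B_\d(z)}\nabla\psi_\eta\cdot\tilde g$. Hölder with exponents $q,q'$, combined with $\a{\tilde g}\le 2\a{\nabla\phi(A)+e}$ and Meyers' reverse-Hölder inequality applied locally to $\phi(A)$ to upgrade $\tilde g$ from $L^2$ to $L^{q'}$ on $B_\d(z)$, bounds this by $C\aa{\nabla\psi_\eta}_{L^q(B_\d(z))}\hat F_z^{1/2}$, where $\hat F_z$ is $F_z$ on a slightly enlarged ball. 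Global Meyers for $-\div A'^*\nabla$ (uniform in $A'\in\Omega$ by \eqref{ellipticity}) yields $\aa{\nabla\psi_\eta}_{L^q(\T)}\le C$, so $\sum_{z\in\ZL}\aa{\nabla\psi_\eta}_{L^q(B_\d(z))}^{q}\le C$ by the bounded-overlap property of $\{B_\d(z)\}$. A Hölder decoupling of $\sum_z\aa{\nabla\psi_\eta}_{L^q(B_\d(z))}^{q}\hat F_z^{q/2}$ in the conjugate exponents $q'/q$ and $q'/(q'-q)$ reduces the right-hand side of \eqref{SG} to $\en{F_0^{r}}$ for some $r<p$, closing the induction. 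The principal obstacle is precisely this decoupling: the supremum over perturbations $A'$ and the absence of any Green-function pointwise decay force one to rely fully on the Meyers reverse-Hölder integrability provided by \eqref{ellipticity}, which is what singles out the specific exponent $q=q(\lambda,d)<2$ in \eqref{SG}.

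For \eqref{2error}, apply \eqref{SG} with $q=2$ to $\zeta:=e_0\cdot A_{hom}(A)e_1$. Introduce the adjoint corrector $\varphi$ solving $-\div A^*(\nabla\varphi+e_0)=0$ with zero mean, and denote by $\varphi'$ its counterpart for $(A')^*$. Testing the equation for $v=\phi(A')-\phi(A)$ against $\varphi'$ and using the corrector equation for $\varphi'$ to cancel a non-local contribution gives
\[
\zeta(A')-\zeta(A)=L^{-d}\int_{B_\d(z)}(\nabla\varphi'+e_0)\cdot(A'-A)(\nabla\phi(A)+e_1),
\]
whence $\osc_{B_\d(z)}\zeta\le CL^{-d}(\sup_{A'}\int_{B_\d(z)}\a{\nabla\varphi'+e_0}^2)^{1/2}F_z^{1/2}$ by $|A'-A|\le 2$ and Cauchy--Schwarz; the supremum is controlled as in the proof of \eqref{2moment}. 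Plugging into \eqref{SG} with $q=2$, applying Cauchy--Schwarz in $\en{\cdot}$, using stationarity (which contributes a factor $L^d$ when summing over $\ZL$), and invoking \eqref{2moment} for both $\phi$ and $\varphi$ (both admissible since $A^*\in\Omega$ with the same $\lambda$) yields $\textrm{Var}(\zeta)\le CL^{-2d}\cdot L^d=CL^{-d}$.
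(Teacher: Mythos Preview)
Your argument for \eqref{2error} is essentially the paper's Step~9 and is fine. The difficulty is entirely in \eqref{2moment}, and there the Meyers-based scheme has a genuine gap at the summability step. You write ``$\sum_{z\in\ZL}\aa{\nabla\psi_\eta}_{L^q(B_\d(z))}^{q}\le C$ by the bounded-overlap property,'' but this inequality holds only for a \emph{fixed} function $\psi_\eta$, where the left side is comparable to $\aa{\nabla\psi_\eta}_{L^q(\T)}^q$. In your oscillation bound, however, $\psi_\eta$ varies with $z$: the coefficient $A'$ in $-\div A'^*\nabla\psi_\eta=\div(\chi_{B_\d}\eta)$ is the perturbation supported at $B_\d(z)$, and the optimizing $\eta$ that realizes $\aa{\nabla v}_{L^2(B_\d)}$ also depends on $z$ through $v$. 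When you form $\sum_z(\osc_{B_\d(z)} F_0^{p/2})^q$, what actually enters is $\sum_z\bigl(\sup_{A',\eta}\aa{\nabla\psi_{\eta,A'}}_{L^q(B_\d(z))}\bigr)^q$, and global (dual) Meyers only says each summand is $\le C$, yielding the useless bound $CL^d$. Nothing in the sketch forces the $z$-indexed family $\{\psi_{\eta(z),A'(z)}\}$ to share decay across balls, so the H\"older decoupling cannot close the induction. Even if you replace $A'^*$ by $A^*$ in the auxiliary equation (absorbing $(A'-A)\nabla v$ into the right-hand side), the dependence of the optimal $\eta$ on $z$ remains, and the same obstruction applies.

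The paper circumvents this by never applying \eqref{SG} to the nonlinear quantity $F_0^{p/2}$. Instead it introduces a finite deterministic family of \emph{linear} functionals $F_k$ on $L^2(B_{2R})$ (Neumann eigenfunctions, Step~7). Linearity is the crux: it permits writing $\sum_z\omega_z\bigl(F_k(\nabla\phi(A_z)+e)-F_k(\nabla\phi(A)+e)\bigr)=F_k(\nabla u)$ for a \emph{single} $u$ solving $-\div A\nabla u=\div g$, so that one weighted energy estimate $\int_{B_{2R}}|\nabla u|^2\lesssim\int_\T(\tfrac{|x|}{R}+1)^{-\alpha}|g|^2$ (Steps~4--5, via hole-filling rather than Meyers) handles all $z$ at once; duality in the arbitrary weights $\{\omega_z\}$ then extracts the $\ell^q$-bound on the oscillations, and this is where $q=q(\lambda,d)$ arises, from the hole-filling exponent $\alpha$, not from a Meyers exponent. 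The passage back from the linear $F_k$ to moments of $\int_{B_R}|\nabla\phi+e|^2$ is through Caccioppoli plus spectral compactness (Step~7), and higher moments are reached via an $L^p$ version of \eqref{SG} applied to the linear $F_k(\nabla\phi+e)$.
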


Similarly to \eqref{osc}, the oscillation in \eqref{SG} is defined through
\begin{multline}\nonumber
 \osc_{B_{\sqrt{d}}(z)} \zeta(A) := \sup\left\{ \zeta(\tilde A)\ |\ \tilde A \in \Omega \textrm{ s.t. } \tilde A(x) = A(x)\ \forall x \not\in B_{\sqrt{d}}(z) \right\}
\\ - \inf\left\{ \zeta(\tilde A)\ |\ \tilde A \in \Omega \textrm{ s.t. } \tilde A(x) = A(x)\ \forall x \not\in B_{\sqrt{d}}(z) \right\}.
\end{multline}


\begin{remark}
Since for any non-negative sequence $\{ a_k \}$ and $p \ge q \ge 1$ we have $\p{ \sum_k a_k^p }^{\frac{1}{p}} \le \p{ \sum_k a_k^q }^{\frac{1}{q}}$, the right-hand side in \eqref{SG} decreases as $q$ increases. From that we see that the assumption \eqref{SG} gets stronger as $q$ becomes larger, in particular Theorem \ref{thm2} holds for any stationary ensemble that satisfies \eqref{SG} with $q=2$, which would be a more common form of the Spectral Gap inequality. 
\end{remark}

\begin{remark}\label{remarkq}
 If we assume that all coefficient fields $A \in \Omega$ satisfy the following stronger (local) version of \eqref{ellipticity}:
 \begin{equation}\label{strong}
\forall r > 0, \varphi \in W^{1,2}(B_r,Y): \quad \int_{B_r} \nabla \varphi \cdot A \nabla \varphi \ge \lambda \int_{B_r} \a{ \nabla \varphi }^2, 
 \end{equation}
 then the best (smallest) $q$ in \eqref{SG} for which Theorem \ref{thm2} holds, satisfies $q(\lambda,d) \to 1$ as $\lambda \to 1$.
\end{remark}

Let us now sketch the main steps in the proof of \eqref{2moment} (for simplicity assuming \eqref{SG} holds for $q=2$). Let $R \ge 1$ be fixed. Then:
\begin{itemize}
 \item there exists $\bar \alpha(\lambda,d)>0$ such that $u$, the unique meanfree periodic solution of $- \nabla A \nabla u = \nabla \cdot g$ in $\T$, satisfies $\int_{B_R} \a{\nabla u}^2 \lesssim \int_{\T} \p{ \frac{|x|}{R} + 1 }^{\bar \alpha} |g|^2$;
 \item for any $F$, deterministic linear functional on $L^2(B_{2R})$, that for any $f \in W^{1,2}(B_{2R})$ satisfies $\a{F(\nabla f)}^2 \lesssim \int_{B_{2R}} \a{ \nabla f }^2$, we have 
 \begin{equation}\nonumber
 \sum_{z \in \ZL} \p{ \osc_{B_{\sqrt{d}}(z)} F(\nabla \phi + e)} ^{2} \lesssim \sup_{z \in \ZL} \p{ \frac{|z|}{R} + 1}^{-\alpha} \int_{B_{\sqrt{d}}(z)} \a{\nabla \phi + e}^2
\end{equation} 
 (exponent $\alpha$ is slightly smaller than $\bar \alpha$ from the previous step);
\item we find a finite deterministic collection of $F_k$ for which the previous step applies, and such that 
 \begin{equation}\nonumber
  \en{ \p{ \int_{B_R} \a{ \nabla \phi + e}^2 }^p } \le C^p \max_k \en{ \a{ F_k(\nabla \phi + e ) }^{2p} };
 \end{equation}
 \item we show $\int_{B_{\sqrt{d}}} |\nabla \phi + e|^2 \lesssim R^{-\alpha} \int_{B_R} |\nabla \phi + e|^2$, and use two previous steps in the $p$-version of the Spectral Gap inequality to show
\begin{equation}\nonumber
 \en{ \p{ \int_{B_R} \a{ \nabla \phi + e}^2 }^p } \lesssim C^{p} \p{ R^{dp} + R^{d-p\alpha}\en{ \p{ \int_{B_R} \a{\nabla \phi + e}^2}^p } }.
\end{equation}
 Hence for $p\alpha > d$ and $R$ large enough we get the desired bound $\en{ \p{ \int_{B_R} \a{ \nabla \phi + e}^2 }^p } \le C$. 
\end{itemize}

We now provide one example of an ensemble, 
the {\it Poisson ensemble}, which satisfies assumptions of Theorem \ref{thm2}:
\begin{example}
Let the configuration of points $X:=\{X_i\}_{i=1,\cdots,N}$ on the torus
be distributed according to the Poisson
point process with density one. This means the following:
\begin{itemize}
\item 
For any two disjoint (Lebesgue measurable) subsets $D$ and $D'$ of the torus we have that
the configuration of points in $D$ and the configuration of points in $D'$ are independent.
%
\item For any (Lebesgue measurable) subset $D$ of the torus, 
the number of points in $D$ is Poisson distributed;
the expected number is given by the Lebesgue measure of $D$.
\end{itemize}
Note that $N$ is random, too.

\smallskip

With any realization $X=\{X_i\}_{i=1,\cdots,N}$ of the Poisson point process, 
we associate the coefficient field $A\in\Omega$ via
\begin{equation}\nonumber
A(x)=\left\{\begin{array}{ccc}
\lambda&\mbox{if}&x\in\bigcup_{i=1}^{N}B_1(X_i)\\
1&\mbox{else}\end{array}\right\}\mathrm{Id}.
\end{equation}
This defines an ensemble on $\Omega$ by ``push-forward'' of the Poisson Point Process.  It is easy to see that the ensemble is stationary and $A \in \Omega$ satisfy \eqref{ellipticity}. 
Moreover, since the Poisson point process satisfies the Spectral Gap inequality (see (1.8) in \cite{SG4Poisson} or \cite{SG4Poisson2}), \eqref{SG} with $q=2$ holds as well. 
\end{example}

\section{Simple argument for moment bounds in the discrete setting}\label{sec:disc}

The proof of Theorem~\ref{thm1} is based on Green function representation, and a similar idea can be used to obtain moment bound on the gradient of the corrector also in the discrete setting. In the case of a whole space $\Z^d$ and for an equation with massive term, Ben-Artzi, Marahrens, and Neukamm \cite{AMN} used this idea to obtain moment bounds on $\nabla \phi$. In fact, their main achievement are moment bounds on the corrector itself in the case of non-diagonal coefficient; moment bounds on $\nabla \phi$ are only small part of their work.

In \cite{AMN} they use the following form of (LSI): for all random variables $\zeta \in C_b(\Omega)$
\begin{equation}\label{3.1}
 \en{ \zeta^2 \ln \frac{\zeta^2}{\en{\zeta^2}} } \le \V \en{ \sum_{y \in \Z^d} \p{ \osc_{a(y)} \zeta}^2 },
\end{equation}
where $\osc_{a(y)} \zeta$ is defined in \eqref{osc}. In order to obtain better control of the constants in the estimate via Herbst argument, we consider \eqref{3.1} with $\osc_{a(e)} \zeta$ on the right-hand side replaced by $\frac{\partial \zeta}{\partial a(e)}$. In the discrete setting, the coefficient field $a$ is a function on $\Z^d$ with values in $\R^{d \times d}$. Assuming only the {\it lower bound} on the coefficients in the form of $y \in \ZL, \forall v \in \R^d : v \cdot a(y) v \ge \lambda \a{v}^2$, we prove that any moment of $\nabla \phi(0)$ is controlled by $\en{ \a{ \nabla \phi(0) + \xi }^2 }^{\frac{1}{2}}$. We point out that to estimate $\en{ \a{ \nabla \phi(0) + \xi }^2 }^{\frac{1}{2}}$ one has to assume also a (not necessarily pointwise) upper bound on $a$. 



Before we state the precise statement we briefly introduce the discrete setting. To simplify the exposition, here we consider only {\it scalar} equations on $\ZL$. For a function $u : \ZL \to \R$, the discrete gradient is a function defined on the set of edges $\E := \bigcup_{i=1}^d (\ZL) + e_i$, where $e_1,\ldots,e_d$ is the canonical basis in $\R^d$, via
\begin{equation}\nonumber
 \nabla u(b) = u(x+e_i) - u(x), \quad \textrm{for } b = [x,x+e_i] \in \E.
\end{equation}
For a function $g : \E \to \R$, the negative divergence $\nabla^* g$  is a function on $\ZL$ defined via
\begin{equation}\nonumber
 (\nabla^* g)(x) = \sum_{i=1}^d g([x-e_i,x])-g([x,x+e_i]).
\end{equation}
In the discrete setting, the coefficient field $a$ associates a uniformly elliptic matrix to each point in $\ZL$. 
For simplicity we only consider the case of diagonal matrices. In that case a coefficient field $a$ can be thought of as a scalar function on the edges $\E$ which satisfies $a(e) \ge \lambda$ for each $e \in \E$. 
As mentioned before we do not assume any upper bound on $a$. 
For differentiable $\zeta : \Omega \to \R$ and $e \in \E$ we write
\begin{equation}\nonumber
 \partial_e \zeta = \frac{\partial \zeta}{\partial a(e)},
\end{equation}
and as in the continuum setting $\en{ \cdot }$ will denote the ensemble average on $\Omega := [\lambda,\infty)^{\E}$. 

\begin{theorem}[Moment bounds on $\nabla \phi$ in the discrete setting]\label{thm3}
 Let $\en{ \cdot }$ be stationary and satisfies the following Logarithmic Sobolev inequality:
 \begin{equation}\label{LSIdisc}
   \en { \zeta^2 \ln \zeta^2 } - \en {\zeta^2} \ln \en{\zeta^2} \le \frac{1}{2\rho} \en{  \sum_{e \in \E}  \a{ \partial_e\zeta}^2 }
 \end{equation}
 for all random variable $\zeta \in L^2(\Omega)$ for which the right-hand side makes sense. Let $\xi \in \R^d, \a{\xi} \le 1$ be fixed and for $a \in [\lambda,\infty)^{\E}$ let  $\phi(a; \cdot)$ be the unique solution to 
 \begin{equation}\label{3-1}
  \nabla^* a(\nabla \phi(a;\cdot) + \xi) = 0
 \end{equation}
 with $\phi(0)=0$. 
 Then for all $1 \le p < \infty$ 
\begin{equation}\label{3s}
 \en{ \sum_{i=1}^d |D_i \phi + \xi_i|^{2p} }^\frac{1}{p} \le \exp\left( \frac{p-1}{2\rho\lambda^2} \right) \en{ \sum_{i=1}^d \left|D_i \phi + \xi_i\right|^2 },
\end{equation}
where $D_i \phi := \nabla \phi(e_i)$. 
\end{theorem}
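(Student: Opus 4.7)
My proposed strategy is to run the Herbst argument fed by the discrete LSI \eqref{LSIdisc}. Set $\zeta := |\nabla\phi(0)+\xi|$, so $\zeta^2 = \sum_i(D_i\phi(0)+\xi_i)^2$, and define $F(p) := \en{\zeta^{2p}}^{1/p}$. Since $\sum_i|D_i\phi(0)+\xi_i|^{2p}$ is equivalent to $\zeta^{2p}$ up to a dimensional constant, the theorem will follow once we establish $F(p) \le F(1)\exp\bigl((p-1)/(2\rho\lambda^2)\bigr)$; this in turn will follow from the differential inequality $(\log F)'(p) \le 1/(2\rho\lambda^2)$, integrated from $p=1$.

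To derive the differential inequality, apply LSI to $\eta := \zeta^p$. The chain rule gives $(\partial_e\eta)^2 = p^2\zeta^{2p-2}(\partial_e\zeta)^2$, and Cauchy--Schwarz applied to the identity $\partial_e\zeta = \zeta^{-1}\sum_i(D_i\phi(0)+\xi_i)\,D_i\partial_e\phi(0)$ yields the pointwise bound $(\partial_e\zeta)^2 \le |\nabla\partial_e\phi(0)|^2$. The standard Herbst identity expressing $(\log F)'(p)$ as a normalised entropy then reduces the whole argument to the pointwise estimate
\begin{equation*}
\sum_{e\in\E}|\nabla\partial_e\phi(0)|^2 \le \frac{1}{\lambda^2}\,|\nabla\phi(0)+\xi|^2.
\end{equation*}

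To tackle this, I would differentiate \eqref{3-1} in direction $a(e)$ for $e=[x,x+e_k]$; this yields $\nabla^*a\nabla(\partial_e\phi) = -(\nabla\phi+\xi)(e)\nabla^*\mathbf{1}_e$, and hence by linearity $\partial_e\phi = (\nabla\phi+\xi)(e)\,v_e$ with $v_e$ the mean-free solution of $\nabla^*a\nabla v_e = -\nabla^*\mathbf{1}_e$. Therefore $|\nabla\partial_e\phi(0)|^2 = (\nabla\phi+\xi)^2(e)\,|\nabla v_e(0)|^2$; using the self-adjointness of the scalar discrete operator, the Green-function symmetry $\nabla v_e([0,e_i]) = \nabla v_{[0,e_i]}(e)$ rewrites
\begin{equation*}
\sum_e|\nabla\partial_e\phi(0)|^2 = \sum_i\sum_e (\nabla\phi+\xi)^2(e)(\nabla u_i)^2(e), \qquad u_i := v_{[0,e_i]}.
\end{equation*}

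The main obstacle is controlling this double sum pointwise by $|\nabla\phi(0)+\xi|^2/\lambda^2$. The two ingredients are the energy estimate $\sum_e a(e)(\nabla u_i)^2(e)\le 1/\lambda$ (test the equation for $u_i$ against itself), giving the pointwise bound $(\nabla u_i)^2(e)\le 1/(a(e)\lambda)$, and the orthogonality $\sum_e a(e)(\nabla\phi+\xi)(e)(\nabla u_i)(e)=0$ obtained by testing \eqref{3-1} against $u_i$. Turning this combination of an energy estimate and a single orthogonality relation into the required quadratic pointwise bound with the sharp constant $1/\lambda^2$ is the delicate step that captures the ellipticity ratio; once it is in hand, Herbst's integration is routine, and the norm equivalence $\ell^{2p}\cong\ell^2$ on $\mathbb{R}^d$ translates the bound on $F(p)$ into the statement \eqref{3s}.
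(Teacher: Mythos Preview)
Your strategy has the right skeleton (Herbst argument driven by the LSI), but the key pointwise estimate
\[
\sum_{e\in\E}|\nabla\partial_e\phi(0)|^2 \;\le\; \frac{1}{\lambda^2}\,|\nabla\phi(0)+\xi|^2
\]
that you isolate as ``the delicate step'' is in fact \emph{false}, and no combination of the energy bound with the single orthogonality relation will rescue it. Expanding the left-hand side via the Green function gives $\sum_i\sum_e(\nabla\nabla G(e_i,e))^2(\nabla\phi+\xi)^2(e)$: this is a weighted sum of $(\nabla\phi+\xi)^2$ over \emph{all} edges of the torus, whereas the right-hand side involves only the $d$ edges at the origin. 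If $(\nabla\phi+\xi)$ happens to be small at the origin but not elsewhere, the inequality breaks down, since the weight $\sum_i(\nabla\nabla G(e_i,e))^2$ decays but is strictly positive for $e$ away from the origin. Note also that your argument never invokes stationarity of the ensemble, which is an explicit hypothesis of the theorem; that is a warning sign.

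The paper closes exactly this gap by using stationarity, and doing so forces two changes relative to your outline. First, one works with each component $f_i:=D_i\phi+\xi_i$ separately rather than with $\zeta=(\sum_i f_i^2)^{1/2}$, and runs Herbst on $F(q):=\sum_i\en{|f_i|^q}$. Second, your pointwise bound is replaced by the \emph{moment} estimate
\[
\sum_{i}\en{\Big(\sum_{e}(\partial_e f_i)^2\Big)^{p}}\;\le\;\lambda^{-2p}\sum_i\en{|f_i|^{2p}},
\]
proved as follows: apply H\"older in $e$ with weights $(\nabla\nabla G(e_i,e))^2$ (normalised via $\sum_e(\nabla\nabla G(e_i,e))^2\le\lambda^{-2}$) to obtain $(\sum_e(\partial_e f_i)^2)^p\le\lambda^{-2(p-1)}\sum_e(\nabla\nabla G(e_i,e))^2(\nabla\phi+\xi)^{2p}(e)$; \emph{then} take the expectation and use stationarity to shift each summand so that $(\nabla\phi+\xi)^{2p}$ is evaluated at an edge adjacent to the origin. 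After this shift the remaining sum over lattice points involves only $(\nabla\nabla G)^2$ with one argument fixed, which is again bounded by $\lambda^{-2}$. This is how the nonlocal weight becomes local---only \emph{after} averaging over the ensemble, not pointwise in $a$.
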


Compared to our setting, in the case of the whole space $\Z^d$ the existence and uniqueness of the corrector $\phi$ is much more subtle. For example, it can be shown that in any dimension $d \ge 2$ there exists a unique corrector $\phi$ with $\phi(0)=0$ such that $\nabla \phi$ is stationary. In contrast, the corrector itself is stationary only if $d > 2$ (see, e.g., \cite{GM}). 

\begin{proof}[Sketch of the proof]
 \mbox{}\\{\bf Step 1:} We claim 
 \begin{equation}\label{3-2}
  \partial_e(\nabla \phi + \xi)(b) = -\nabla\nabla G(a;b,e)(\nabla \phi+\xi)(e),
 \end{equation}
 where $G(a;x,y)$ is the Green function defined by $\nabla^* a(\cdot) \nabla G(a;\cdot,y) = \delta_y(\cdot)$, and $\nabla \nabla G$ denotes the second mixed derivative. Here $\xi(e) = \nabla\bar\xi(e)$ with $\bar\xi(x) := \xi \cdot x$. 

To prove \eqref{3-2}, we first differentiate \eqref{3-1} w.r.t. $a(e)$, $e \in \E$ to get $\nabla^* a \nabla \partial_e \phi = \nabla^* g_e$, where $g_e(b) = -\delta_{e}(b) \p{\nabla \phi + \xi}(b)$. Using Green function representation we see
 \begin{equation}\nonumber
\partial_e \phi(x) = \sum_{y \in \Z^d} G(a;x,y)(\nabla^* g_e)(y) = \sum_{b \in \E} \nabla G(a;x,b) g_e(b) = -\nabla G(a;x,e) (\nabla \phi + \xi)(e).
 \end{equation}
Hence $\partial_e (\nabla \phi+\xi)(b) = \nabla \partial_e \phi(b) = - \nabla \nabla G(a;b,e)(\nabla \phi +\xi)(e)$, and \eqref{3-2} is proved.

\noindent\smallskip{\bf Step 2} (see also (37) in \cite{MO}): We claim that for any $e \in \E$ we have
 \begin{equation}\label{3-3}
  \sum_{b \in \E} \a{ \nabla \nabla G(a;b,e) }^2 \le \lambda^{-2},
 \end{equation}
 and 
 \begin{equation}\label{3-3sym}
  \sum_{e \in \E} \a{ \nabla \nabla G(a;b,e) }^2 \le \lambda^{-2}.
 \end{equation} 
 For any function $\varphi \in l^2(\Z^d)$ and $y \in \ZL$ we have by definition of Green function $\sum_{b\in\E} \nabla \varphi(b) a(b) \nabla G(a;b,y) = \varphi(y)$. We differentiate the weak formulation in $y$ and set $\varphi(x) = \nabla G(a;x,e)$ 
 to obtain
 \begin{equation}\nonumber
  \sum_{b\in \E} \nabla \nabla G(a;b,e) a(b) \nabla \nabla G(a;b,e) = \nabla \nabla G(a;e,e).
 \end{equation}
 By assumption $a \ge \lambda$, so the left-hand side is bounded from below by $\lambda\sum_{b \in \E} \a{\nabla \nabla G(a;b,e)}^2$ while the right-hand side is trivially bounded from above by $\p{ \sum_{b \in \E} \a{\nabla \nabla G(a;b,e)}^2}^\oh$, and \eqref{3-3} follows.  
 Finally, symmetry of $G$ in the form $\nabla \nabla G(a;e,b) = \nabla \nabla G(a; b,e)$
 (see discussion below (16) in \cite{AMN}) implies \eqref{3-3sym}. 

\noindent\smallskip{\bf Step 3:} We claim
 \begin{equation}\label{3-4}
  \sum_{i=1}^d \en{ \p{ \sum_{e \in \E} \p{ \partial_e \p{ D_i \phi + \xi_i } }^2 }^p } 
  \le \lambda^{-2p} \sum_{i=1}^d \en{ \a{ D_i \phi + \xi_i }^{2p} },
 \end{equation}
 where $D_i \phi = \nabla \phi(e_i)$. 
 We know from Step 1 that $\partial_e (D_i \phi + \xi_i) = -\nabla\nabla G(a;e_i,e)(\nabla \phi + \xi)(e)$. Hence 
 \begin{align*}
  \p{ \sum_{e \in \E} \p{ \partial_e (D_i \phi + \xi_i) }^2 }^{p} =& \p{ \sum_{e \in \E} \p{ \nabla\nabla G(a;e_i,e)}^2 \p{\nabla \phi + \xi}^2(e) }^{p} 
\\ 
\le& \p{ \sum_{e \in \E} \p{ \nabla\nabla G(a;e_i,e)}^2 }^{p-1} \!\!\!\!\sum_{e \in \E} \p{ \nabla\nabla G(a;e_i,e)}^2 \p{\nabla \phi + \xi}^{2p}(e) 
\\ 
\overset{\mathclap{\textrm{\eqref{3-3sym}}}}{\le} &\ \lambda^{-2(p-1)} \sum_{e \in \E} \p{ \nabla\nabla G(a;e_i,e)}^2 \p{\nabla \phi + \xi}^{2p}(e).
 \end{align*}
 We take the ensemble average of the above and use $\E = \bigcup_{j=1}^d {\p{\ZL} + e_j}$ to obtain
 \begin{align*}
  \en{ \p{ \sum_{e \in \E} \p{ \partial_e (D_i \phi + \xi_i) }^2 }^{p} } 
  &\le \lambda^{-2(p-1)} \en{ \sum_{j=1}^d \sum_{x \in \ZL} \p{ \nabla\nabla G(a;e_i,x+e_j)}^2 \p{\nabla \phi + \xi}^{2p}(x+e_j) }
\\ &\overset{\mathclap{\eqref{stat}}}{=} \lambda^{-2(p-1)} \sum_{j=1}^d \sum_{x \in \ZL} \en{ \p{ \nabla\nabla G(a;-x+e_i,e_j)}^2 \p{\nabla \phi + \xi}^{2p}(e_j) }
\\ &= \lambda^{-2(p-1)} \sum_{j=1}^d \en{ \p{ \sum_{x \in \ZL} \p{ \nabla\nabla G(a;-x+e_i,e_j)}^2 } \p{D_j \phi + \xi_j}^{2p} }.
 \end{align*}
 Finally, we sum over $i=1,\ldots,d$ to get
 \begin{multline*}
  \sum_{i=1}^d \en{ \p{ \sum_{e \in \E} \p{ \partial_e (D_i \phi + \xi_i) }^2 }^{p} } 
\\ \le \lambda^{-2(p-1)}\sum_{j=1}^d \en{ \underbrace{\p{ \sum_{i=1}^d \sum_{x \in \ZL} \p{ \nabla\nabla G(a;-x+e_i,e_j)}^2 }}_{\le \lambda^{-2} \textrm{ by \eqref{3-3} in Step 2}} \p{D_j \phi + \xi_j}^{2p} },
 \end{multline*}
 and \eqref{3-4} follows. 

\noindent\smallskip{\bf Step 4:} To prove \eqref{3s} we use variation of Herbst argument (see, e.g, \cite{ledouxbook}), which is based on the identity $q \frac{\ud}{\ud{q}} f^q = f^q \ln f^q$ and the control of $f^q \ln f^q$ using Logarithmic Sobolev inequality. We denote $f_i := D_i \phi + \xi_i$, and observe that for $q \ge 2$
\begin{align*}
 q \frac{\ud}{\ud{q}} \sum_{i=1}^d \en{ \a{f_i}^q } &= \sum_{i=1}^d \en{ \a{f_i}^q \ln \a{f_i}^q } \overset{\eqref{LSIdisc}}{\le} \sum_{i=1}^d \p{ \en{ \a{f_i}^q} \ln \en{ \a{f_i}^q} + \frac{1}{2\rho} \en{ \sum_{e\in\E} \a{ \partial_e \p{ \a{f_i}^{\frac{q}{2}}} }^2 } } 
\\ &\le \sum_{i=1}^d \p{ \en{ \a{f_i}^q } \ln \en{ \a{f_i}^q } + \frac{q^2}{8\rho} \en{ \a{f_i}^{q-2}  \sum_{e\in\E} \a{ \partial_e f_i }^2 } } 
\\ &\le \sum_{i=1}^d \en{ \a{f_i}^q } \ln \en{ \a{f_i}^q } + \frac{q^2}{8\rho} \sum_{i=1}^d \en{ \a{f_i}^q }^{\frac{q-2}{q}} \en{ \p{ \sum_{e\in\E} \a{ \partial_e f_i }^2}^{\frac{q}{2}} }^\frac{2}{q}
\\ &\le \sum_{i=1}^d \en{ \a{f_i}^q } \ln \en{ \a{f_i}^q } + \frac{q^2}{8\rho} \p{ \sum_{i=1}^d \en{ \a{f_i}^q }}^{\frac{q-2}{q}} \p{ \sum_{i=1}^d \en{ \p{ \sum_{e\in\E} \a{ \partial_e f_i }^2}^{\frac{q}{2}} }}^\frac{2}{q} 
\\ &\!\! \overset{\eqref{3-4}}{\le} \p{ \sum_{i=1}^d \en{ \a{f_i}^q } } \ln \p{ \sum_{i=1}^d \en{ \a{f_i}^q } }+ \frac{q^2}{8\rho\lambda^2} \p{ \sum_{i=1}^d \en{ \a{f_i}^q }}.
\end{align*}
Writing $F(q) := \sum_{i=1}^d \en{ \a{f_i}^q }$, the last inequality yields $q \frac{\ud}{\ud q} F(q) \le F(q) \ln F(q) + \frac{q^2}{8\rho\lambda^2}F(q)$. This is equivalent to $\frac{\ud}{\ud q}\p{\frac{1}{q} \ln F(q)} \le \frac{1}{8\rho\lambda^2}$, and so after integration we obtain \eqref{3s} in the form $F(2p)^{\frac{1}{p}} \le \exp(\frac{p-1}{2\rho\lambda^2})F(2)$.

\end{proof}

\section{The main ingredients in the proof of Theorem~\ref{thm1}}\label{sec:pf1}

The proof of Theorem \ref{thm1} follows the idea of the proof of Theorem \ref{thm3}. First, we obtain the $L^2$-estimate for the second mixed derivative of the Green matrix. In the continuum setting $\gg G(A;x,y)$ (at least for smooth $A$) behaves like $|x-y|^{-d}$. Hence it is not $L^2$-integrable near the singularity $x=y$, and one should not expect a complete analogue of \eqref{3-3}. Instead, we obtain:

\begin{lemma}\label{lm1}
 Let the coefficient field $A$ satisfies~\eqref{ellipticity} and \eqref{Asmooth}. Then for every $R \in (0,1)$ and every $y \in \T$ we have
\begin{equation}\label{eq1.1}
 R^d \int_{
|x-y| > R} \left| \gg G(x,y)\right|^2 \dx \le C(d,n,\lambda,\alpha,H).
\end{equation}
\end{lemma}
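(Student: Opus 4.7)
I would prove Lemma~\ref{lm1} by combining a Caccioppoli-type inequality with the classical pointwise Schauder bound on the gradient of the adjoint Green function. The starting observation is that, for fixed $y_0 \in \T$, the function $H(x) := \nabla_y G(A;x,y_0)$ (which we think of as a collection of $Y$-valued functions of $x$, indexed by the direction of the $y$-derivative and the column of $G$) is $A$-harmonic on $\T \setminus \{y_0\}$: formally differentiating the defining equation $-\nabla_x \cdot (A(x)\nabla_x G(x,y)) = \delta_y(x)\,\mathrm{Id}$ in $y$ yields $-\nabla \cdot A \nabla H = 0$ away from $y_0$, and this can be made rigorous either distributionally or via a difference-quotient argument using that $y \mapsto G(x,y)$ is $C^{1,\alpha}$ in a neighborhood of $y_0$ for $x \ne y_0$.

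Next, I would apply a Caccioppoli estimate to $H$. Choose a cutoff $\eta \in C^\infty(\T)$ with $\eta \equiv 0$ on $B_{R/2}(y_0)$, $\eta \equiv 1$ outside $B_R(y_0)$, and $|\nabla \eta| \le C/R$. Because $\eta^2 H$ vanishes in a neighborhood of $y_0$, it is an admissible test function for the $A$-harmonicity of $H$; plugging it in and using the integral coercivity \eqref{ellipticity} together with the pointwise upper bound $|AM|\le|M|$ (this is the only place where the purely integral nature of \eqref{ellipticity} needs care, but a direct Young-type absorption yields a constant depending only on $\lambda$), one obtains
\begin{equation*}
 \int_{\T} \eta^2 |\nabla H|^2 \dx \le C(\lambda)\, R^{-2} \int_{B_R(y_0) \setminus B_{R/2}(y_0)} |H|^2 \dx.
\end{equation*}

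The final ingredient is the pointwise Schauder bound $|\nabla_y G(A;x,y_0)| \le C |x-y_0|^{1-d}$ for $x \ne y_0$, valid for Green's matrices of elliptic systems whose coefficients satisfy \eqref{ellipticity} and are locally $\alpha$-H\"older continuous in the sense of \eqref{Asmooth}; the constant depends only on $d,n,\lambda,\alpha,H$. Substituting $|H|^2 \lesssim |x-y_0|^{2-2d}$ into the annular integral on the right-hand side of the Caccioppoli estimate gives
\begin{equation*}
 \int_{B_R(y_0) \setminus B_{R/2}(y_0)} |H|^2 \dx \lesssim R^d \cdot R^{2-2d} = R^{2-d},
\end{equation*}
and combining with the Caccioppoli bound yields $\int_{|x-y_0|>R} |\gg G|^2 \dx \le C R^{-d}$, which is precisely \eqref{eq1.1}.

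The main technical obstacle is the pointwise Schauder gradient bound on $\nabla_y G$: for scalar equations it is a direct consequence of De Giorgi--Nash--Moser and the H\"older assumption on $A$, but for systems one cannot invoke a maximum principle and must argue via iteration of interior Campanato estimates, starting from the energy bound $\int_{|x-y_0|\sim r}|\nabla_x G|^2 \lesssim r^{2-d}$ obtained by testing the Green function equation against a suitable cutoff of $G$ itself. A minor dimensional subtlety: in $d=2$ the integrand $|x-y_0|^{2-2d}$ is only borderline integrable at infinity, but because the Caccioppoli step only involves the thin annulus $R/2 < |x-y_0| < R$, no logarithmic loss occurs and the bound $R^{-d}$ is uniform in the torus size $L$.
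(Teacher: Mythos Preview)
Your approach is correct but genuinely different from the paper's. You argue via an exterior Caccioppoli inequality for $H(x)=\nabla_y G(x,y_0)$ combined with the pointwise Schauder bound $|\nabla_y G(x,y_0)|\lesssim |x-y_0|^{1-d}$ (which the paper does quote elsewhere, from Dolzmann--M\"uller). The paper instead avoids invoking that pointwise bound and gives a self-contained two-step argument: first an energy estimate on a \emph{mollified} (in $y$, at scale $r$) version of $\nabla_x\nabla_y G$, yielding $\|\nabla_x\nabla_y G_r(\cdot,y)\|_{L^2(\T)}\lesssim r^{-d/2}$; then a Schauder estimate in $y$ to control the difference between the mollified and unmollified second gradients by $(r/R)^\alpha$ times the quantity being sought, followed by a buckling argument (choose $r=\epsilon R$ and absorb). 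Your route is shorter once the Dolzmann--M\"uller bound is accepted, and the exterior Caccioppoli under the integral coercivity \eqref{ellipticity} works exactly as in the paper's Step~3 of the proof of Theorem~\ref{thm2} (applied to $\eta H\in W^{1,2}(\T)$). The paper's route is more self-contained: it only uses interior Schauder estimates and basic energy bounds, and does not need to import the pointwise gradient bound on the Green matrix, which for systems is itself a nontrivial input. One small point to keep track of in your argument is uniformity in the torus size $L$: since the annulus $R/2<|x-y_0|<R$ lies inside $B_1(y_0)$, the pointwise bound you invoke is purely local and its constant depends only on $d,n,\lambda,\alpha,H$, so this is fine.
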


 If $A$ satisfies \eqref{ellipticity} and \eqref{Asmooth}, then so does its transpose $A^t$ (in coordinates $(A^t)_{ij}^{\alpha\beta} = A_{ji}^{\beta\alpha}$), and estimate \eqref{eq1.1} holds also for Green matrix $G^t$ associated with $A^t$. By \cite[Theorem 1]{DMGreen} $G^t(y,x) = G(x,y)$, and so \eqref{eq1.1} for $G^t$ implies 
 \begin{equation}\label{eq1.1+}
  R^d \int_{
|x-y| > R} \left| \gg G(x,y)\right|^2 \dy \le C(d,n,\lambda,\alpha,H).
 \end{equation}
Here, given a coefficient field $A$, the Green matrix $G : \T \times \T \rightarrow Y^n$ is a mean-free $L$-periodic function which satisfies
\begin{equation}\nonumber
 -\nabla \cdot \left( A(\cdot) \nabla G^k(\cdot,y) \right) = \p{ \delta_y(\cdot) - L^{-d}} e_k
\end{equation}
for $k=1,\ldots,n$, where $e_1,\ldots,e_n$ denotes the canonical basis in $Y$. Existence of the Green matrix follows from works of Fuchs \cite{fuchs84,fuchs86} and also  Dolzmann and M\"uller \cite{DMGreen}). In these papers existence and properties of Green matrix for a bounded domain with zero boundary data are proved, but their methods apply also in the periodic setting. They need the coefficient field $A$ to be continuous (or at least to belong to $L^\infty$, and in addition to the space of functions with vanishing mean oscillations $VMO$ in the case $d\ge3$), $A$ has to satisfy Legendre-Hadamard condition and be coercive in the sense of \eqref{ellipticity}. Recently, Conlon, Giunti and Otto \cite{CGO} proved existence of the Green matrix (almost surely) for random coefficient field $A$ assuming only  \eqref{ellipticity} and stationarity of the ensemble. 

As an immediate consequence of Lemma~\ref{lm1} we get
\begin{cor}\label{cor1}
 Let $A$ satisfies~\eqref{ellipticity} and \eqref{Asmooth}. Then for every $\ep > 0$ 
we have
\begin{equation}\label{eq1.2}
 \begin{aligned}
 \forall y \in \T: \quad \int_{
|x-y| < 4} |x-y|^{d+\ep} \left| \gg G(x,y)\right|^2 \dx &\le C(d,n,\lambda,\alpha,H,\ep),\\
 \forall x \in \T: \quad \int_{
|x-y| < 4} |x-y|^{d+\ep} \left| \gg G(x,y)\right|^2 \dy &\le C(d,n,\lambda,\alpha,H,\ep).
 \end{aligned}
\end{equation}
 
\end{cor}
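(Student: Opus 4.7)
The plan is to derive the weighted bound \eqref{eq1.2} from Lemma \ref{lm1} by a straightforward dyadic decomposition of the ball $\{|x-y|<4\}$ around the singularity. The non-trivial part of Lemma \ref{lm1} is the region where $R$ is small, and this is exactly where we need to insert the weight $|x-y|^{d+\varepsilon}$ to make things integrable.

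First, fix $y\in\T$ and split
\begin{equation*}
\{x\in\T\,:\,|x-y|<4\}\;=\;\Bigl(\bigcup_{k\ge 0} A_k\Bigr)\;\cup\;\{1\le |x-y|<4\},
\qquad A_k:=\{2^{-k-1}<|x-y|\le 2^{-k}\}.
\end{equation*}
On each annulus $A_k$ the weight satisfies $|x-y|^{d+\varepsilon}\le 2^{-k(d+\varepsilon)}$, so applying Lemma \ref{lm1} with $R=2^{-k-1}\in(0,1)$ gives
\begin{equation*}
\int_{A_k}|x-y|^{d+\varepsilon}\bigl|\gg G(x,y)\bigr|^2\dx
\;\le\; 2^{-k(d+\varepsilon)}\int_{|x-y|>2^{-k-1}}\bigl|\gg G(x,y)\bigr|^2\dx
\;\le\; C\,2^{-k(d+\varepsilon)}\,2^{(k+1)d}
\;=\;C\,2^{d}\,2^{-k\varepsilon}.
\end{equation*}
Summing the geometric series $\sum_{k\ge 0}2^{-k\varepsilon}=(1-2^{-\varepsilon})^{-1}$ controls the near-singular part by a constant depending additionally on $\varepsilon$.

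For the remaining annulus $\{1\le|x-y|<4\}$ the weight is bounded by $4^{d+\varepsilon}$, so one further application of Lemma \ref{lm1} with, say, $R=\tfrac12$ yields a constant bound. Adding these contributions establishes the first inequality in \eqref{eq1.2}.

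The second inequality, with integration in $y$ rather than $x$, is obtained in exactly the same way but starting from \eqref{eq1.1+} instead of \eqref{eq1.1}; recall that \eqref{eq1.1+} was derived from Lemma \ref{lm1} applied to the transposed coefficient field $A^{t}$ together with the symmetry $G^{t}(y,x)=G(x,y)$ from \cite{DMGreen}. No step here is technically delicate: the whole argument is merely a bookkeeping of the dyadic sum, whose convergence is exactly what the extra factor $|x-y|^{\varepsilon}$ purchases.
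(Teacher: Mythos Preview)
Your proof is correct and is exactly the standard dyadic-decomposition argument that the paper implicitly has in mind when it calls Corollary~\ref{cor1} ``an immediate consequence of Lemma~\ref{lm1}'' without giving a proof. There is nothing to compare against: the paper omits the details entirely, and your bookkeeping fills them in correctly.
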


Using the corollary we show an analogue of \eqref{3-4}:
\begin{lemma}\label{lm2}
 Let $\zeta := \left( \int_{B_1} (u(x) - u(0))^2 \dx \right)^{1/2}$, where $u(x) := e \cdot x + \phi(x)$. Then under the assumption of Theorem \ref{thm1} we have for any $1 \le p < \infty$
 \begin{equation}\label{LSIest}
  \en{ \left( \int_{\T} \left( \int_{B_1(z)} \left| \frac{\partial \zeta}{\partial A(y)}\right| \dy \right)^2 \dz \right)^p }^{\frac{1}{p}} \lesssim \en{ \zeta^{2p} }^{\frac{1}{p}},
 \end{equation}
where $\frac{\partial \zeta}{\partial A(y)}$ was defined in \eqref{frechet} and $\lesssim$ denotes $\le$ up to a multiplicative constant depending on $d,n,\lambda,\alpha,H$.
\end{lemma}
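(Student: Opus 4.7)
My approach adapts the strategy of the discrete proof (Theorem \ref{thm3}, Step 3) to the continuum, with Lemma \ref{lm1}/Corollary \ref{cor1} in place of the deterministic bound \eqref{3-3}.

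First, I would derive a Green-matrix representation for $\partial\zeta/\partial A(y)$. Differentiating the corrector equation in the direction $B$ and using the Green matrix $G(A;\cdot,\cdot)$ yields $\partial_A u(x)\cdot B=-\int_\T \nabla_y G(x,y)\,B(y)\,\nabla u(y)\,\dy$, where $u:=e\cdot x+\phi$. Differentiating $\zeta^2=\int_{B_1}(u(x)-u(0))^2\,\dx$ then gives
\begin{equation*}
 \zeta\,\frac{\partial\zeta}{\partial A(y)} \;=\; -\Phi(y)\otimes\nabla u(y),\qquad \Phi(y):=\int_{B_1}(u(x)-u(0))\bigl[\nabla_y G(x,y)-\nabla_y G(0,y)\bigr]\dx.
\end{equation*}
Writing $\nabla_y G(x,y)-\nabla_y G(0,y)=\int_0^1\gg G(tx,y)\cdot x\,\ud{t}$ shows that the relevant quantity controlling $\Phi$ is $\gg G$, to which Lemma \ref{lm1} and Corollary \ref{cor1} apply---this is the continuum counterpart of the Green-function representation in Step 1 of the proof of Theorem \ref{thm3}.

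Next I would reduce the LHS of \eqref{LSIest} to a moment bound for a weighted $L^2$-norm of $\nabla u$. A careful pairing of $(u(x)-u(0))$ against $\nabla_y G(x,y)-\nabla_y G(0,y)$ based on the weighted $L^2$-bound of Corollary \ref{cor1} on $\gg G$, combined with Fubini in $z$ and $y$, should produce a deterministic kernel $\kappa\ge 0$ with $\int_\T\kappa\le C$ (uniformly in $A$ and $L$) such that
\begin{equation*}
 \int_\T\Bigl(\int_{B_1(z)}\Bigl|\frac{\partial\zeta}{\partial A(y)}\Bigr|\,\dy\Bigr)^2\dz \;\le\; C\int_\T\kappa(y)\,|\nabla u(y)|^2\,\dy.
\end{equation*}
Jensen's inequality with weight $\kappa/\int\kappa$---the continuum analog of the Hölder step in Step 3 of Theorem \ref{thm3}---then gives $\bigl(\int\kappa\,|\nabla u|^2\bigr)^p\le(\int\kappa)^{p-1}\int\kappa\,|\nabla u|^{2p}$. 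Taking expectation and invoking stationarity ($\en{|\nabla u(y)|^{2p}}$ is independent of $y$), combined with the Schauder bound $|\nabla u(0)|^2\le C\zeta^2$ available under \eqref{Asmooth}, converts the right-hand side into $C\en{\zeta^{2p}}$, yielding \eqref{LSIest}.

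The main obstacle I expect is producing the deterministic kernel $\kappa$ in the intermediate step. The naive Cauchy--Schwarz bound $|\Phi(y)|\le\zeta\,G_0(y)$ with $G_0(y):=\bigl(\int_{B_1}|\nabla_y G(x,y)-\nabla_y G(0,y)|^2\,\dx\bigr)^{1/2}$ is useless for $y\in B_1$, where $G_0^2=\infty$: the $|x-y|^{-(d-1)}$ singularity of $|\nabla_y G(\cdot,y)|$ at $x=y$ is not compensated by subtracting the ($x$-constant) value $\nabla_y G(0,y)$. A more refined pairing is needed, exploiting both the $|x|$-factor from the mean-value representation and the H\"older regularity of $\nabla_y G(\cdot,y)$ in $x$ coming from Schauder under \eqref{Asmooth}, so that Corollary \ref{cor1} can be applied with the right exponent---this is where the smoothness hypothesis \eqref{Asmooth} on the coefficient field enters essentially.
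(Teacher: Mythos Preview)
Your overall framework is correct and matches the paper: Green-matrix representation for $\partial u/\partial A$, the weighted $L^2$-bounds on $\gg G$ from Lemma~\ref{lm1}/Corollary~\ref{cor1}, stationarity to turn spatial integrals into a single moment, and Schauder to pass from $\zeta$ to $|\nabla u(0)|$. You also correctly locate the only real difficulty, namely the near-field $y\in B_4$.

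The gap is that you do not resolve that difficulty, and the fix you gesture at (``H\"older regularity of $\nabla_y G(\cdot,y)$ in $x$ from Schauder'') is not the device that works. Your deterministic-kernel formulation cannot hold as stated: any kernel dominating the naive bound would have to control $G_0(y)^2=\int_{B_1}|\nabla_y G(x,y)-\nabla_y G(0,y)|^2\,\dx$, and since $|\nabla_y G(x,y)|\sim|x-y|^{1-d}$ this is not integrable in $y$ over $B_1$ for $d\ge 2$; the subtraction of $\nabla_y G(0,y)$ does not touch the singularity at $x=y$. So there is no $\kappa$ with $\int\kappa<\infty$ and a linear bound in $|\nabla u|^2$.

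The paper's resolution is twofold. First (their Step~1) it does a preliminary Cauchy--Schwarz in $x$ to reduce to the pointwise-in-$x$ quantity $\partial(u(x)-u(0))/\partial A(y)$, which decouples the two singularities at $y=0$ and $y=x$. Second (their Step~3), it splits into far field $|z|>3$, handled exactly by your straight-line mean-value argument plus Lemma~\ref{lm1}, and near field $|y|<4$, where the key idea is a \emph{detour curve}: writing $x=te_1$, one connects $0$ to $x$ via $\gamma=[0,-te_d]\cup[-te_d,-te_d+x]\cup[-te_d+x,x]$, which lies in the closed lower half-space. For $y\in B_4^+$ one then has $\int_\gamma|y-x'|^{-d-\frac12}\,\ud{x'}\lesssim\max(|y|,|x-y|)^{-d+\frac12}$, and smuggling in the weight $|y-x'|^{d+\frac12}$ lets Corollary~\ref{cor1} absorb the $\gg G$ factor. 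A further H\"older (exponent $8d-2$) converts the resulting near-field bound into $(\int_{B_5}|\nabla u|^{2p})^{1/2p}$ rather than a weighted $L^2$-norm; this is what replaces your kernel step and is why the argument lands in $L^{2p}$ before invoking stationarity, not at the $L^2$ level.
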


In order to prove Lemma \ref{lm2} we will need to show 
\begin{equation}\label{dudA}
 \frac{\partial u(x) }{\partial A(y)} = -\nabla_y G(x,y) \nabla u(y).
\end{equation}
Having Lemma \ref{lm2}, it is straightforward to use almost the same argument as in Step 4 of the proof of Theorem \ref{thm3} to estimate any moment of $\zeta$. Since $A$ is H\"older continuous, using Schauder estimates we can control $\a{\nabla u(0)} = |\nabla \phi(0) + e|$ with $\zeta$ and \eqref{1moment} follows. Finally, since the Spectral Gap inequality follows from the Logarithmic Sobolev inequality, \eqref{1moment} with $p=4$ implies \eqref{1error}. 



%

\section{Proofs for Theorem 1}\label{sec:pf1+}

\begin{proof}[Proof of Lemma \ref{lm1}]
We split the proof into two step. The first one resembles the first step in the proof of Theorem~\ref{thm3}. The idea in the discrete setting was to differentiate the equation for the Green function (in its weak form) in the $y$ variable, then test the equation with the gradient of the Green function itself, and use ellipticity to obtain the estimate for the $L^2$ norm of the second mixed derivative of $G$. In the continuum setting, because of the singularity of $G$, this can not be repeated verbatim. Instead, we apply the argument for a smoothed-out version of $G$ (Step 1), and then use Schauder regularity theory to estimate the difference between $\gg G$ and its mollification (Step 2). 

\mbox{}\\
\noindent\medskip 
{\bf Step 1:}
We smear out $\gg G$ in y over lengthscale $r \in (0,1)$ and estimate its $L^2$-norm by a multiple of $r^{-\frac{d}{2}}$. More precisely, for any $r \in (0,1)$ and $y \in \T$ we claim
\begin{equation}\label{est1}
 \int_{\T} \left| \frac{1}{\a{B_r}} \int_{|y' - y| \le r} \gg G(x,y') \ud{y'}\right|^2 \dx \le \lambda^{-2}\frac{nd}{\a{B_r}}.
\end{equation}
Indeed, fix $k \in \{ 1,\ldots,n \}$. By definition of Green matrix $G$ we have for any mean-free periodic test function  $\varphi \in W^{1.2}(\T;Y)$ 
\begin{equation}\nonumber
  \int_{\T} \nabla_x \varphi(x) \cdot A(x) \nabla_x G^k(x,y) \dx = \varphi(y).
\end{equation}
We apply convolution in $y$ with a kernel $\a{B_r}^{-1} \chi_{B_r(0)}$ to both sides of the 	equation and differentiate in $y_i$ to get 
\begin{equation}\nonumber
 \int_{\T} \nabla_x \varphi(x) \cdot A(x) \nabla_x \nabla_{y_i} G_r^k(x,y) \dx = \nabla_{y_i} \varphi_r(y),
\end{equation}
where subscript $f_r$ denotes average value of $f$ in $y$ over a ball of radius $r$. By approximation we can set $\varphi(x) := \nabla_{y_i} G^{k}_{r}(x,y)$ and use \eqref{ellipticity} to get
\begin{equation}\nonumber
 \lambda \int_{\T} \a{ \nabla_x \nabla_{y_i} G^k_r(x,y)}^2 \dx \le  \int_{\T} \nabla_x \nabla_{y_i} G_r^k(x,y) \cdot A(x) \nabla_x \nabla_{y_i} G_r^k(x,y) \dx = \nabla_{x_i} \nabla_{y_i} G^{k}_{rr} (y,y),
\end{equation}
where subscript $_{rr}$ denotes averaging in both variables. 
Using $\| f * g \|_{L^\infty} \le \| f \|_{L^2} \| g \|_{L^2}$ we obtain
\begin{equation}\nonumber
 \left| \nabla_{x_i} \nabla_{y_i} G_{rr}^{k} (y,y) \right| 
\le \a{B_r}^{-1/2} \left( \int_{\T} \left| \nabla_{x_i} \nabla_{y_i} G_{r}^{k}(x,y) \right|^2 \dx \right)^{1/2},
\end{equation}
which, combined with the previous relation and after summing over $k$ and $i$, implies~\eqref{est1}. 

\smallskip\noindent{\bf Step 2:} 
In the next step we improve \eqref{est1} by removing averaging over balls $B_r$ while staying away from the singularity. 

Let $R \in (0,1]$ and $y \in \T$ be fixed. Since we assume $R \le 1$, we can use standard Schauder estimates (see, e.g., \cite[Theorem 5.19]{giaqmart}). Hence, \eqref{Asmooth} together with the fact that away from the singularity $\nabla_y G$ solves $-\nabla_x \cdot A \nabla_x (\nabla_y G) = 0$ implies for all $x \in \T : |x-y| > 2R \ge 2r$:
\begin{multline}\label{est2}
 \left| \frac{1}{\a{B_r}} \int_{|y' - y| < r} \gg G(x,y') \ud{y'} - \gg G(x,y) \right| \le r^{\alpha} \left[ \gg G(x,\cdot) \right]_{\alpha,B_r(y)} 
\\ \le 
C \left( \frac{r}{R} \right)^\alpha \left( \frac{1}{\a{B_{2R}}} \int_{|y' - y| < 2R} |\gg G(x,y')|^2 \ud{y'} \right)^\oh.
\end{multline}
Here, $[\cdot]_{\alpha,B}$ denotes $C^{0,\alpha}$ H\"older norm in a ball $B$. By triangle inequality we get
\begin{multline}\nonumber
 \left( \int_{|x-y| > 3R} |\gg G(x,y)|^2 \dx \right)^\oh 
\\
\begin{aligned}
\le& \left( \int_{|x-y|>3R} \left| \frac{1}{\a{B_r}} \int_{|y' - y| < r} \gg G(x,y') \ud{y'} \right|^2 \dx \right)^\oh
\\ &+ \left( \int_{|x-y|>3R} \left| \frac{1}{\a{B_r}} \int_{|y' - y| < r} \gg G(x,y') \ud{y'} - \gg G(x,y) \right|^2 \dx \right)^\oh 
\\ &\!\!\!\!\!\!\!\!\!\!\!\!\!\overset{\eqref{est1},\eqref{est2}}{\lesssim} \frac{1}{\a{B_r}^\oh} + \left( \frac{r}{R} \right)^\alpha \left( \int_{|x-y|>3R} \frac{1}{\a{B_{2R}}} \int_{|y' - y| < 2R}  |\gg G(x,y')|^2 \ud{y'} \dx \right)^\oh 
\\ \lesssim& \frac{1}{\a{B_r}^\oh} + \left(\frac{r}{R}\right)^{\alpha} \left( \sup_{y' \in \T} \int_{|x-y'|>R} \left| \gg G(x,y') \right|^2 \dx \right)^\oh,
 \end{aligned}
\end{multline}
where the last inequality follows from the inclusion $\{ (x,y') : |x-y| > 3R, |y'-y| < 2R \} \subset \{ (x,y') : |x-y'| > R, |y-y'| < 2R\}$. 
Now consider 
\begin{equation}\nonumber
 \Lambda :=  \sup_{0 < R \le 1} \sup_{y' \in \T} \left( R^d \int_{|x-y'|>R} \left| \gg G(x,y')\right|^2 \dx \right)^\oh.
\end{equation}
We observe that $\Lambda < \infty$, and so the derivation above implies
\begin{equation}\nonumber
 3^{-d} \left( (3R)^d \int_{|x-y| > 3R} |\gg G(x,y)|^2 \dx \right)^\oh \le C \left( \frac{R}{r} \right)^\frac{d}{2} + C \left( \frac{r}{R} \right)^\alpha \Lambda. 
\end{equation}
We choose $r := \ep R$, and take supremum over $0 < R \le 1/3$ and over $y \in \T$ to derive
\begin{equation}\nonumber
 \Lambda \lesssim \left( \frac{1}{\ep} \right)^\frac{d}{2} + \ep^{\alpha} \Lambda,
\end{equation}
which by suitable choice of $\ep$ implies \eqref{eq1.1}.
\end{proof}

\begin{proof}[Proof of Lemma~\ref{lm2}]
 To obtain sensitivity estimate on $\zeta$, by a simple argument we first show that it is enough to understand sensitivity of $u(x) - u(0)$ in the form \eqref{eq100} (Step 1). Then we derive an expression for $\frac{\partial \phi(x)}{\partial A(y)}$ in terms of the Green function (Step 2). In the last step we use this formula (together with conclusions of Corollary \ref{cor1}) to finish the argument. 

 \mbox{}\\
 {\bf Step 1:} To show the sensitivity estimate \eqref{LSIest}, it is enough to prove that for every $x \in B_1(0)$ 
 \begin{equation}\label{eq100}
  \en{ \left( \int_{\T} \left( \int_{B_1(z)} \left| \frac{\partial (u(x) - u(0))}{\partial A(y)}\right| \dy \right)^2 \dz \right)^p } \le C^p \en{ |\nabla u(0)|^{2p} }.
 \end{equation} 
 Recall that $\zeta = \left( \int_{B_1} (u(x) - u(0))^2 \dx \right)^\frac{1}{2}$. We first estimate the inner integral in \eqref{LSIest}: for any $z \in \T$ we have
 \begin{multline}\nonumber
\begin{aligned}
  \int_{B_1(z)} \left| \frac{\partial \zeta}{\partial A(y)}\right| \dy
&\le \int_{B_1(z)} \frac{1}{\zeta} \left( \int_{B_1(0)} |u(x) - u(0)|\cdot\left| \frac{\partial (u(x)-u(0))}{\partial A(y)} \right| \dx \right) \dy 
\\
 &= \frac{1}{\zeta} \int_{B_1(0)} |u(x) - u(0)|\cdot \left( \int_{B_1(z)} \left| \frac{\partial (u(x)-u(0))}{\partial A(y)} \right| \dy \right) \dx 
\\ &\!\!\!\!\overset{\textrm{H\"older}}{\le} \frac{1}{\zeta} \left( \int_{B_1(0)} |u(x) - u(0)|^2 \dx \right)^\oh 
\\
&\qquad \qquad \times \p{ \int_{B_1(0)} \left( \int_{B_1(z)} \left| \frac{\partial (u(x)-u(0))}{\partial A(y)} \right| \dy \right)^2 \dx }^\oh 
\\ &= \p{ \int_{B_1(0)} \left( \int_{B_1(z)} \left| \frac{\partial (u(x)-u(0))}{\partial A(y)} \right| \dy \right)^2 \dx }^{\frac{1}{2}}.
\end{aligned}
 \end{multline}
We use this estimate together with \eqref{eq100} to show \eqref{LSIest}:
 \begin{multline}\nonumber
    \en{ \left( \int_{\T} \left( \int_{B_1(z)} \left| \frac{\partial \zeta}{\partial A(y)}\right| \dy \right)^2 \dz \right)^p } 
\\ \begin{aligned}
\\ &\le \en{ \left( \int_{\T} \int_{B_1(0)} \left( \int_{B_1(z)} \left| \frac{\partial (u(x)-u(0))}{\partial A(y)} \right| \dy \right)^2 \dx \dz \right)^p }
\\ &\le C^p \en{ \int_{B_1(0)} \left( \int_{\T} \left( \int_{B_1(z)} \left| \frac{\partial (u(x)-u(0))}{\partial A(y)} \right| \dy \right)^2 \dz \right)^p \dx }
\\ &=  C^p \int_{B_1(0)} \en{ \left( \int_{\T} \left( \int_{B_1(z)} \left| \frac{\partial (u(x)-u(0))}{\partial A(y)} \right| \dy \right)^2 \dz \right)^p } \dx \overset{\eqref{eq100}}{\le} C^p \en{ |\nabla u(0)|^{2p} }.
   \end{aligned}
 \end{multline}
Using Schauder theory (see, e.g., \cite[Theorem 5.19]{giaqmart}) we get that $|\nabla u(0)| \lesssim \zeta$, and we see that \eqref{eq100} indeed implies \eqref{LSIest}.

\smallskip\noindent{\bf Step 2:} To prove \eqref{eq100} we need to show the following formula for the vertical derivative of the corrector:
\begin{equation}\label{dphiA}
 \frac{\partial \phi(x) }{\partial A(y)} = -\nabla_y G(x,y) \p{ \nabla \phi(y) + e}.
\end{equation}
Here, $\phi$ is the solution of  $-\nabla \cdot A (\nabla \phi + e) = 0$, and $\frac{\partial \phi(x)}{\partial A(y)}$ is understood in the sense of \eqref{frechet}. Since $A$ is $4$-th order tensor and $\phi$ is $1$-st order tensor, $\frac{\partial \phi(x)}{\partial A(y)}$ is $5$-th order tensor (i.e., there is no contraction on indices on the right-hand side). Formally, differentiating equation for $\phi$ with respect to $A(y)$ one gets $-\nabla \p{ A \p{ \nabla \frac{\partial \phi(x)}{\partial A(y)} } } = \delta_y(x) \nabla\p{ \textrm{Id}\p{ \nabla \phi(x) + e} }$, which using Green function representation yields \eqref{dphiA}. 

To prove \eqref{dphiA} rigorously, let $B \in L^\infty(\T;\mathcal{L}(Y^d,Y^d))$ be smooth and $\delta_0 > 0$ be such that $A+\delta B \in \Omega$ for $\delta \in (0,\delta_0)$. 
By $\phi_\delta$ we denote the solution of 
\begin{equation}\label{phibar}
 -\nabla \cdot ( (A+\delta B)(\nabla \phi_\delta + e) ) = 0.
\end{equation}
Subtracting equations for $\phi$ and $\phi_\delta$, we arrive at $- \nabla (A(\nabla \phi_\delta - \nabla  \phi)) = \nabla (\delta B(\nabla \phi_\delta + e))$, and so 
\begin{multline}\label{Gderivest}
 \frac{\phi_\delta(x) - \phi(x)}{\delta} = \int_{\T} -\nabla_y G(A;x,y) \cdot B(y) (\nabla  \phi_\delta(y) + e) \dy                                                                                             \\ = - \int_{\T} \nabla_y G(A;x,y) \cdot B(y) (\nabla \phi(y) + e) \dy + \int_{\T} \nabla_y G(A;x,y) \cdot B(y) (\nabla \phi(y) - \nabla \phi_\delta(y)) \dy.
\end{multline}
Since $B$ is smooth, using Schauder estimates we have that $\aa{ \nabla \phi - \nabla \phi_\delta}_{L^\infty(\T)} \to 0$ as $\delta \to 0$. This together with the fact that $\nabla_y G(A;x,y) \in L^1(\T)$ (see, e.g., \cite[Theorem 2]{DMGreen}) implies that the second term on the right-hand side of \eqref{Gderivest} goes to $0$ as $\delta \to 0$,
%
and \eqref{dphiA} follows. By Schauder estimates $\nabla \phi(\cdot) + y \in L^\infty(\T;Y^d)$, and so estimate $\a{\nabla_y G(x,y)} \le C\a{x-y}^{1-d}$ (see, e.g., \cite[Theorem 2]{DMGreen}), together with H\"older's inequality yields $\frac{\partial \phi(x)}{\partial A(\cdot)} \in L^1(\T;\mathcal{L}(Y^d,Y^d))$. 

\smallskip\noindent{\bf Step 3:}
It remains to prove \eqref{eq100}. We write
\begin{multline}\label{eq101}
\en{ \left( \int_{\T} \left( \int_{B_1(z)} \left| \frac{\partial (u(x) - u(0))}{\partial A(y)}\right| \dy \right)^2 \dz \right)^p } 
\\ 
\begin{aligned} 
 \le & \ C^{p-1} \en{ \left( \int_{B_4(0)} \left| \frac{\partial (u(x) - u(0))}{\partial A(y)}\right| \dy \right)^{2p} } 
\\
+ & \ C^{p-1} \en{ \left( \int_{|z| > 3} \left( \int_{B_1(z)} \left| \frac{\partial (u(x) - u(0))}{\partial A(y)}\right| \dy \right)^2 \dz \right)^p } 
\end{aligned}
\end{multline}
and estimate two terms on the right-hand side separately. 

For the first term, w.l.o.g. we assume that $x = t e_1$ for some $t \in [0,1)$. We define a curve $\gamma$, which consists of three line segments: $0\leftrightarrow -t e_d$, $-t e_d \leftrightarrow -t e_d + x$, and $-t e_d + x \leftrightarrow x$. Since $\frac{\partial (u(x) - u(0))}{\partial A(y)} = - (\nabla_y G(x,y) - \nabla_y G(0,y)) \nabla u(y)$ (proved in the previous step), we have (using notation $B_4^+(0) = \{ p \in B_4(0) : p \cdot e_d > 0 \}$):
\begin{multline}\nonumber
 \int_{B_4^+(0)} \left| \frac{\partial (u(x) - u(0))}{\partial A(y)} \right| \dy 
\le \int_{B_4^+(0)} \left| \nabla_y G(x,y) - \nabla_y G(0,y) \right| \left| \nabla u(y) \right| \dy 
\\
 \begin{aligned}
 &\le \int_\gamma \int_{B_4^+(0)} \left| \gg G(x',y)\right| \left| \nabla u(y)\right| \dy \ud{x'}
\\ &\!\!\!\!\!\overset{\textrm{H\"older}}{\le} \int_\gamma \left( \int_{B_4^+(0)} |y-x'|^{d+\frac{1}{2}} |\gg G(x',y)|^2 \dy \right)^\oh \left( \int_{B_4^+(0)} |y-x'|^{-d-\frac{1}{2}} |\nabla u(y)|^2 \dy \right)^\oh \ud{x'}
\\ &\!\!\overset{\eqref{eq1.2}}{\lesssim} \left( \int_\gamma  \int_{B_4^+(0)} |y-x'|^{-d-\frac{1}{2}} |\nabla u(y)|^2 \dy \ud{x'} \right)^\oh = \left( \int_{B_4^+(0)} \left( \int_\gamma |y-x'|^{-d-\frac{1}{2}} \ud{x'} \right) |\nabla u(y)|^2 \dy \right)^\oh,
 \end{aligned}
\end{multline}
where in the middle step we smuggled in weights $\a{y-x'}^{d+\frac{1}{2}}$ and $\a{y-x'}^{-d-\frac{1}{2}}$. 

By a simple scaling argument we observe that $\int_\gamma |y-x'|^{-d-\frac{1}{2}} \ud{x'} \lesssim \max(|y|,|x-y|)^{-d+\frac{1}{2}}$, and so 
\begin{equation}\nonumber
\begin{aligned}
  \int_{B_4^+(0)} &\left| \frac{\partial (u(x) - u(0))}{\partial A(y)} \right| \dy \lesssim \left( \int_{B_4^+(0)} \max(|y|,|x-y|)^{-d+\frac{1}{2}}|\nabla u(y)|^2 \dy \right)^\oh 
\\ &\!\!\!\!\overset{\textrm{H\"older}}{\lesssim} \left( \int_{B_4^+(0)} \max(|y|,|x-y|)^{-d+\frac{1}{4}} \dy \right)^{\frac{d-\frac{1}{2}}{2d-\frac{1}{2}}} \left( \int_{B_4^+(0)} |\nabla u(y)|^{8d-2} \dy \right)^{\frac{1}{8d-2}} 
\\ &\lesssim \left( \int_{B_4^+(0)} |\nabla u(y)|^{2p} \dy \right)^\frac{1}{2p},
 \end{aligned}
\end{equation}
for $2p = 8d-2$. By symmetry we have the same estimate with $B_4^+(0)$ on the left-hand side replaced by its complement in $B_4(0)$, and so 
\begin{equation}\nonumber
  \int_{B_4(0)} \left| \frac{\partial (u(x) - u(0))}{\partial A(y)} \right| \dy \lesssim  \left( \int_{B_4(0)} |\nabla u(y)|^{2p} \dy \right)^\frac{1}{2p}.
\end{equation}
By Jensen's inequality the previous estimate holds for all $2p \ge 8d-2$. Using 
local smoothness of $A$ and by considering slightly larger ball we can get the estimate for all $1 \le p <  \infty$:
\begin{equation}\nonumber
 \int_{B_4(0)} \left| \frac{\partial (u(x) - u(0))}{\partial A(y)} \right| \dy \lesssim  \left( \int_{B_5(0)} |\nabla u(y)|^{2p} \dy \right)^\frac{1}{2p}.
\end{equation}
Then by stationarity
\begin{multline}\nonumber
 \en{ \left( \int_{B_4(0)} \left| \frac{\partial (u(x) - u(0))}{\partial A(y)}\right| \dy \right)^{2p} }^{\frac{1}{2p}} \lesssim \en{ \left( \int_{B_5(0)} |\nabla u(y)|^{2p} \dy \right) }^{\frac{1}{2p}} 
\\ = \p{ \int_{B_5(0)} \en{ |\nabla u(y)|^{2p} } \dy }^{\frac{1}{2p}} = \p{ \int_{B_5(0)} \en{ |\nabla u(0)|^{2p} } \dy }^{\frac{1}{2p}}\lesssim \en{ |\nabla u(0)|^{2p} }^{\frac{1}{2p}}.
\end{multline}

\medskip
It remains to estimate the latter term on the right-hand side \eqref{eq101}. We again use $\frac{\partial (u(x) - u(0))}{\partial A(y)} = - (\nabla_y G(x,y) - \nabla_y G(0,y)) \nabla u(y)$ to write
\begin{multline}\nonumber
  \left( \int_{|z| \ge 3} \left( \int_{B_1(z)} \left| \frac{\partial (u(x) - u(0))}{\partial A(y)} \right| \dy \right)^2 \dz \right)^p 
\\ \begin{aligned}
&\le \left( \int_{|z| \ge 3} \left( \int_{B_1(z)} \int_0^1 \left| \gg G(tx,y) \right| |\nabla u(y)| \ud{t} \dy \right)^2 \dz \right)^p 
\\ &\le \int_0^1 \left( \int_{|z| \ge 3} \left( \int_{B_1(z)} \left| \gg G(tx,y) \right| |\nabla u(y)| \dy \right)^2 \dz \right)^p \ud{t}
\\ &\le C^p \int_0^1 \left( \int_{|z| \ge 3} \int_{B_1(z)} \left| \gg G(tx,y) \right|^2 |\nabla u(y)|^2 \dy \dz \right)^p \ud{t} 
\\ &\le C^p \int_0^1 \left( \int_{|y| \ge 2} \left| \gg G(tx,y) \right|^2 |\nabla u(y)|^2 \dy \right)^p \ud{t}
\\ &\le C^p \int_0^1 \left( \int_{|y| \ge 2} \left| \gg G(tx,y) \right|^2 \dy \right)^{p-1} \left( \int_{|y| \ge 2} \left| \gg G(tx,y) \right|^2 |\nabla u(y)|^{2p} \dy \right) \ud t 
\\ &\!\!\overset{\eqref{eq1.1+}}{\le} C^p \int_0^1 \int_{|y| \ge 2} \left| \gg G(tx,y) \right|^2 |\nabla u(y)|^{2p} \dy \ud t.
\end{aligned}
\end{multline}
Hence
\begin{multline}\nonumber
 \en{\left( \int_{|z| \ge 3} \left( \int_{B_1(z)} \left| \frac{\partial (u(x) - u(0))}{\partial A(y)} \right| \dy \right)^2 \dz \right)^p} 
\\ 
\begin{aligned}
&\le C^p \int_0^1 \int_{|y|\ge 2} \en{ \left| \gg G(tx,y) \right|^2 |\nabla u(y)|^{2p} } \dy \ud{t} 
\\ &\overset{\textrm{stationarity}}{=} C^p \int_0^1 \int_{|y|\ge 2} \en{ \left| \gg G(tx-y,0) \right|^2 |\nabla u(0)|^{2p} } \dy \ud{t} 
\\ &\le C^p \en{ \int_0^1 \int_{|y|\ge 1} \left| \gg G(0,y) \right|^2 \dy \ud{t}\ |\nabla u(0)|^{2p} } 
\\ &\!\!\overset{\eqref{eq1.1+}}{\le} C^p \en {|\nabla u(0)|^{2p}},
\end{aligned}
\end{multline}
which concludes the proof of Lemma~\ref{lm2}.
\end{proof}

\begin{proof}[Proof of Theorem \ref{thm1}]
\mbox{}\\{\bf Step 1:}
 The proof of \eqref{1moment} uses Lemma~\ref{lm2} and resembles Step 4 in the proof of Theorem \ref{thm3}. For $q \ge 2$ we have
\begin{equation}\nonumber
\begin{aligned}
 q \frac{\ud}{\ud{q}} \en{ \zeta^q } &= \en{ \zeta^q \ln \zeta^q } \overset{\eqref{1LSI}\textrm{ for } \zeta^\frac{q}{2}}{\le} \en{\zeta^q} \ln \en{ \zeta^q} + \frac{C}{\rho} \en{ \int_{\T} \left( \int_{B_1(z)} \left| \frac{\partial ( \zeta^{q/2} )}{\partial A(y)}\right| \dy \right)^2 \dz }
\\ &= \en{\zeta^q} \ln \en{ \zeta^q} + \frac{C}{\rho}q^2 \en{ \zeta^{q-2} \int_{\T} \left( \int_{B_1(z)} \left| \frac{\partial \zeta}{\partial A(y)}\right| \dy \right)^2 \dz } 
\\ &\!\!\!\!\overset{\textrm{H\"older}}\le \en{\zeta^q} \ln \en{ \zeta^q} + \frac{C}{\rho}q^2 \en{ \zeta^{q} }^{\frac{q-2}{q}} \en{ \p{ \int_{\T} \left( \int_{B_1(z)} \left| \frac{\partial \zeta}{\partial A(y)}\right| \dy \right)^2 \dz}^\frac{q}{2} }^\frac{2}{q}
\\ &\!\!\overset{\eqref{LSIest}}{\le} \en{\zeta^q} \ln \en{ \zeta^q} + \frac{C}{\rho}q^2 \en{ \zeta^{q} }.
\end{aligned}
\end{equation}
Denoting $F(q) := \en{ \zeta^q }$, similarly as before in Step 4 of the proof of Theorem 3 we get $F(q)^\frac{1}{q} \le \exp(\frac{C}{\rho}(q-2))F(2)^{\frac{1}{2}}$. 
Using energy estimates and stationarity of $\en{\cdot}$ we show $F(2) \lesssim 1$, which then implies \eqref{1moment}.
\medskip\mbox{}\\{\bf Step 2:}
To derive the error estimate \eqref{1error} from the moment bound \eqref{1moment}, we use that the Logarithmic Sobolev inequality implies the Spectral Gap inequality (see, e.g., \cite{ledouxbook}), i.e., that assuming \eqref{1LSI} we get 
\begin{equation}\label{T.10}
\en{ \p{ \zeta-\en{\zeta} }^2} \le \frac{1}{\rho} \en{ \int_{\T} \p{ \int_{B_1(z)} \a{ \frac{\partial \zeta}{\partial A(y)} } \dy }^2 \dz }
\end{equation}
for $\zeta\colon\Omega\rightarrow\mathbb{R}$ for which the right-hand side makes sense. 
\medskip\mbox{}\\{\bf Step 3}. Deterministic estimate of the vertical derivative. We claim
\begin{equation}\label{1ahom}
\a{ \frac{\partial \p{e_0 A_{hom}(A) e_1} }{\partial A(y)} } = L^{-d} \a{ \nabla \phi_0'(y) + e_0} \a{ \nabla \phi_1(y) + e_1},
\end{equation}
where $\phi'_0$ denotes corrector in direction $e_0$ for coefficient field $A^t$, adjoint of $A$. To show \eqref{1ahom}, consider two arbitrary coefficient fields $A,\bar A:=A+\delta B\in\Omega$ for $B$ smooth. We write for abbreviation $\phi_i(x)=\phi(A;x)$, $\bar \phi_i(x)=\phi(\bar A;x)$, $\phi_i'(x)=\phi(A^t;x)$, and $\bar \phi_i'(x)=\phi(\bar A^t;x)$ for correctors in directions $e_i$, $i=0,1$. By definition of the homogenized coefficient $A_{hom}$ 
\begin{eqnarray}\label{var}
\lefteqn{L^d(e_0 \cdot A_{hom}(\bar A)e_1 -e_0 \cdot A_{hom}(A)e_1)}\\ \nonumber
&\stackrel{\eqref{T.2}}{=}&
\int_{\T}e_0\cdot \bar A(\nabla\bar \phi_1+e_1)
-\int_{\T}e_0\cdot A(\nabla\phi_1+e_1)\\ \nonumber
&\overset{\eqref{T.3}\textrm{ for }A,\bar A}=&
\int_{\T}(\nabla\bar\phi_0'+e_0)\cdot \bar A(\nabla\bar\phi_1+e_1)
-\int_{\T}(\nabla\phi_0'+e_0)\cdot A(\nabla\phi_1+e_1)\\ \nonumber
&=&
\int_{\T}(\nabla\bar\phi_0'-\nabla\phi_0')\cdot \bar A(\nabla\bar\phi_1+e_1)
+\int_{\T}(\nabla\phi_0'+e_0)\cdot \bar A(\nabla\bar\phi_1+e_1)\\ \nonumber
&&-\int_{\T}(\nabla\phi_0' + e_0)\cdot A(\nabla \phi_1 - \nabla \bar \phi_1) - \int_{\T}(\nabla\phi_0'+e_0)\cdot A(\nabla\bar\phi_1+e_1)
\\ \nonumber &\overset{\eqref{T.3}\textrm{ for }\bar A,1A^t}=&\int_{\T}(\nabla\phi_0'+e_0)\cdot (\bar A-A)(\nabla\bar\phi_1+e_1)
\\ \nonumber &=&\int_{\T}(\nabla\phi_0'+e_0)\cdot (\bar A-A)(\nabla\phi_1+e_1) +
\int_{\T}(\nabla\phi_0'+e_0)\cdot (\bar A-A)(\nabla\bar\phi_1-\nabla\phi_1).
\end{eqnarray}

\noindent
Recalling that $\bar A = A+\delta B$, to show \eqref{1ahom} it is enough to show that the last term above divided by $\delta$ converges to $0$ as $\delta \to 0$ by showing that 
\begin{equation}\label{deriv1}
 \int_{\T}(\nabla\phi_0'+e_0)\cdot B (\nabla\bar\phi_1-\nabla\phi_1) \le \aa{B}_{L^\infty(\T)} \p{ \int_{\T} \a{ \nabla \phi_0' + e_0 }^2 }^{\frac{1}{2}} \p{ \int_{\T} \a{ \nabla \bar\phi_1 - \nabla \phi_1 }^2 }^{\frac{1}{2}} \to 0
\end{equation}
as $\delta \to 0$. 

Using \eqref{T.3} for $A$ and $A^t$ gives $\int_{\T} \a{ \nabla\phi'_0 + e_0}^2 \lesssim L^d$ and $\int_{\T} \a{ \nabla\phi_1 + e_1}^2 \lesssim L^d$. Since $-\nabla\cdot \bar A(\nabla \bar \phi_1 - \nabla \phi_1) = \delta \nabla \cdot B(\nabla \phi_1+e_1)$, we get the following  estimate $\int_{\T} \a{ \nabla \phi_1 - \nabla \phi_1 }^2 \lesssim \delta^2 \aa{B}^2_{L^\infty(\T)} \int_{\T} \a{ \nabla\phi_1 + e_1}^2 \lesssim L^d \delta^2 \aa{B}^2_{L^\infty(\T)}$, from where \eqref{deriv1} immediately follows. 
\medskip
\mbox{}\\{\bf Step 4}. Conclusion. In view of \eqref{T.10} and \eqref{1ahom} we have
\begin{align*}
\en{ \left(e_0\cdot A_{hom}e_1-\en{ e_0\cdot A_{hom}e_1}\right)^2} \le& \frac{1}{\rho}
\en{ \int_{\T} \p{ L^{-d} \int_{B_1(z)} \a{ \nabla \phi_0'(y) + e_0 } \a{ \nabla \phi_1(y) + e_1} \dy }^2 \dz } 
\\ 
=& \frac{L^{-2d}}{\rho} \int_{\T} \en{ \int_{B_1(z)} \a{ \nabla \phi_0'(y) + e_0 }^4 +  \a{ \nabla \phi_1(y) + e_1}^4 \dy } \dz 
\\ 
\overset{\eqref{stat}}\lesssim & \frac{L^{-2d}}{\rho} L^d \p{ \en{ \a{ \nabla \phi_0' + e_0 }^4 } + \en{ \a{ \nabla \phi_1 + e_1}^4 } }
\\ 
\overset{\eqref{1moment}}\lesssim& \frac{L^{-d}}{\rho},
\end{align*}
where to estimate $\en{ \a{ \nabla \phi'_0 + e_0 }^4 }$ in the last step using \eqref{1moment}, we used the fact that \eqref{1moment} holds also for the ensemble obtained by the push-forward of $A \mapsto A^t$. 
\end{proof}

\section{Proof of Theorem \ref{thm2}}\label{sec:pf2}

\begin{proof}[Proof of Theorem \ref{thm2}:]

\mbox{}\\ {\bf Step 1}. Stationarity: For any center $z \in \T$, radius $R > 0$ and exponent $1 \le p < \infty$ we have the identity
\begin{equation}\label{p.21}
\en{ \left( \int_{B_R(z)}|\nabla\phi+e|^2\right)^p} =
\en{ \left( \int_{B_R}|\nabla\phi+e|^2\right)^p}.
\end{equation}
Indeed, the stationarity (\ref{L2.11}) of $\phi$ also yields stationary of $\nabla\phi$, that is,
\begin{equation}\nonumber
\nabla\phi(A;x+z)=
\nabla\phi(A(\cdot+z);x)
\end{equation}
and thus
\begin{equation}\nonumber
\int_{B_R(z)}|\nabla\phi(A;x')+e|^2 \dx'
=\int_{B_R}|\nabla\phi(A(\cdot+z);x)+e|^2 \dx.
\end{equation}
By stationarity of $\en{ \cdot }$, cf.\ (\ref{stat}), applied to $\zeta(A)=(\int_{B_R(z)}|\nabla\phi(A;x)+e|^2 \dx)^p$, we get
\eqref{p.21}.
%

\medskip
\noindent{\bf Step 2}. For any $R \ge 1$ and any $1 \le p < \infty$ we have
\begin{equation}\label{p.14}
\en{ \left(\int_{B_{2R}}|\nabla\phi+e|^2\right)^p}
\le C(d)^p \en{ \left(\int_{B_R}|\nabla\phi+e|^2\right)^p}.
\end{equation}
%
Indeed, there exist points $z_1,\cdots,z_N$ on the torus such that
$B_{2R}\subset \bigcup_{n=1}^NB_R(z_n)$, where $N$ depends only on $d$. Thus we have
\begin{equation}\nonumber
\int_{B_{2R}}|\nabla\phi+e|^2\le\sum_{n=1}^N\int_{B_R(z_n)}|\nabla\phi+e|^2.
\end{equation}
We take the $p$-th power and apply H\"older's inequality 
\begin{equation}\nonumber
\left(\int_{B_{2R}}|\nabla\phi+e|^2\right)^p\le N^{p-1} \sum_{n=1}^N \left(\int_{B_R(z_n)}|\nabla\phi+e|^2\right)^p;
\end{equation}
taking the expectation then yields
\begin{equation}\nonumber
\en{\left(\int_{B_{2R}}|\nabla\phi+e|^2\right)^p} \le N^p \max_{n=1,\cdots,N} \en{\left(\int_{B_R(z_n)}|\nabla\phi+e|^2\right)^p}.
\end{equation}
By stationarity in form of \eqref{p.21} from Step 1, this together with $N=C(d)$  yields \eqref{p.14}.

\mbox{}\\{\bf Step 3.} Caccioppoli inequality. We claim that for any $R > 0$ and any $\bar u \in \R$
\begin{equation}\label{cac}
 \int_{B_R} \a{\nabla u}^2 \le \frac{C}{\lambda R^2} \int_{B_{2R} \setminus B_R} \a{u-\bar u}^2
\end{equation}
provided $\div A\nabla u = 0$ in $B_{2R}$. To prove \eqref{cac}, we test $\div A\nabla (u-\bar u) = 0$ with $\eta^2 (u-\bar u)$, where $\eta(x) := \min (1, \max ( 0, 2 - \a{x}/R))$ (i.e., $\eta$ is a cut-off function for $B_R$ in $B_{2R}$) to get
\begin{multline}\nonumber
\lambda \int_{\T} \a{\nabla \p{ (u-\bar u)\eta} }^2
\overset{\eqref{ellipticity}}\le \int_{\T}  \nabla ((u-\bar u) \eta)  \cdot A\nabla ((u-\bar u) \eta) 
\\ \le \int_{\T} 2 \a{ u-\bar u }\a{\nabla \eta} \a{\nabla (\eta (u-\bar u))} + \p{ u-\bar u}^2 \a{ \nabla \eta }^2. 
\end{multline}
where in the last inequality we used that $\a{A} \le 1$. Application of Young's inequality together with definition of $\eta$ then gives \eqref{cac}. 

\mbox{}\\{\bf Step 4.} Hole-filling argument (see for instance \cite[p. 81]{giaquinta}). 
We claim that there exists $\bar \alpha = \bar \alpha(d,\lambda) > 0$ such that for any $0 < \rho \le r$ we have 
\begin{equation}\label{e72}
 \int_{B_\rho} \a{ \nabla u }^2 \lesssim \p{ \frac{\rho}{r} }^{\bar \alpha} \int_{B_r} \a{ \nabla u }^2 \textrm{ provided } - \nabla\cdot A \nabla u = 0 \textrm{ in } B_r.
\end{equation}
Since we do not need a sharp estimate, we will prove \eqref{e72} only for $r = 2^n\rho$, $n \in \mathbb N$. Using \eqref{cac} from Step 3 and Poincar\' e inequality we get for any $0 < \sigma \le \frac{r}{2}$
\begin{equation}\nonumber
 \int_{B_\sigma} |\nabla u|^2 \overset{\eqref{cac}}{\le} \frac{C}{\lambda \sigma^2} \int_{B_{2\sigma}\setminus B_\sigma} \a{u - \bar u}^2 \le \frac{C(d)}{\lambda} \int_{B_{2\sigma}\setminus B_\sigma} \a{\nabla u}^2,
\end{equation}
where $\bar u$ is the average value of $u$ in $B_{2\sigma}\setminus B_\sigma$. 
Denoting $a_n := \int_{B_{2^n\rho}} \a{\nabla u}^2$, this implies $a_n \le \frac{C(d)}{\lambda} ( a_{n+1} - a_n)$. By moving $a_n$ to the left-hand side we get $a_n \le \theta a_{n+1}$ with $\theta = C(d)/(C(d)+\lambda) < 1$. We write $\theta = 2^{-\bar \alpha}$ (for some $\bar \alpha > 0$) and iterate the previous estimate to get $\int_{B_\rho} \a{\nabla u}^2 \le \theta^n \int_{B_{2^n\rho}} \a{\nabla u}^2 = (2^n)^{-\bar \alpha} \int_{B_{2^n\rho}} \a{\nabla u}^2$. The proof of this step is complete.

\mbox{}\\{\bf Step 5.} There exists $\alpha = \alpha(d,\lambda)$ such that for any $\rho \ge 1$
\begin{equation}\label{e74}
 \int_{B_\rho} \a{ \nabla u }^2 \lesssim \int_{\T} \p{ \frac{\a{x}}{\rho}+1 }^{-\alpha} \a{g}^2 
\textrm{ provided } - \nabla\cdot A \nabla u = \nabla \cdot g \textrm{ in } \T.
\end{equation}
To show \eqref{e74} we decompose $g = g_0 + \sum_{n=1}^\infty g_n$ with $g_0 = g\chi_{B_\rho}$ and $g_n = g \chi_{B_{2^n\rho}\setminus B_{2^{n-1}\rho}}$. By $u_n$ we denote the unique solution (with zero average over $\T$) of $-\nabla\cdot A\nabla u_n = \nabla \cdot g_n$. Since $\int_{\T} A\nabla u_n \nabla u_n = \int_{\T} - g_n \nabla u_n$, by \eqref{ellipticity} and H\"older's inequality we get $\int_{\T} \a{\nabla u_n}^2 \le \lambda^{-2} \int_{\T} \a{g_n}^2$. Since $-\div A \nabla u = 0$ in $B_{2^{n-1}\rho}$, we get for all $n\ge1$ 
\begin{equation}\nonumber
 \int_{B_\rho} \a{ \nabla u_n }^2 \overset{\eqref{e72}}\lesssim 2^{-(n-1)\bar \alpha} \int_{B_{2^{n-1}\rho}} \a{ \nabla u_n }^2 \lesssim 2^{-n\bar\alpha}\int_{\T} \a{g_n}^2.
\end{equation}
For $n=0$ we already obtained such estimate. Since $\nabla u = \sum_{n=0}^\infty \nabla u_n$, by triangle inequality in $L^2(B_\rho)$ we obtain
\begin{equation}\label{eq75}
 \begin{split}
 \p{ \int_{B_\rho} \a{\nabla u}^2 }^\frac{1}{2} &\le \sum_{n=0}^\infty \p{ \int_{B_\rho} \a{\nabla u_n}^2 }^\frac{1}{2} \lesssim \sum_{n=0}^\infty \p{ 2^{-n\bar\alpha} \int_{\T} \a{g_n}^2 }^\frac{1}{2} 
\\ 
&\overset{\textrm{H\"older}}{\lesssim} \p{ \sum_{n=0}^\infty  2^{-n\ep} }^\frac{1}{2} \p{ \sum_{n=0}^\infty 2^{-n\alpha} \int_{\T} \a{g_n}^2 }^\frac{1}{2}, 
\end{split}
\end{equation}
where $\ep > 0$ and $\alpha > 0$ s.t. $\bar\alpha = \alpha + \ep$. Since $g_n$ is supported in $B_{2^n\rho}\setminus B_{2^{n-1}\rho}$, we see that $2^{-n\alpha}\a{g_n(x)}^2 \lesssim \p{\frac{|x|}{\rho}+1}^{-\alpha}\a{g_n(x)}^2$. Hence it follows from \eqref{eq75} (using that $\ep > 0$, so that the last but one sum in \eqref{eq75} is summable)
\begin{equation}\nonumber
 \int_{B_\rho} \a{\nabla u}^2 \lesssim \sum_{n=0}^\infty 2^{-n\alpha} \int_{\T} \a{g_n}^2 \lesssim \sum_{n=0}^\infty \int_{\T} \p{\frac{|x|}{\rho}+1}^{-\alpha} \a{ g_n}^2 = \int_{\T} \p{\frac{|x|}{\rho}+1}^{-\alpha} \a{g}^2.
\end{equation}

{
\mbox{}\\{\bf Step 6.} 
For any $R \ge 1$ and $F$, linear (deterministic) functional on $L^2(B_{2R})$ that satisfies 
\begin{equation}\label{66}
  \a{ F(\nabla f) }^2 \lesssim \int_{B_{2R}} \a{ \nabla f }^2, \quad \forall f \in W^{1,2}(B_{2R})
\end{equation}
we have for $l=1,\ldots,n$ 
\begin{equation}\label{eq77}
 \p{ \sum_{z \in \ZL} \p{ \osc_{B_{\sqrt{d}}(z)} F(\nabla \phi_l + e_l)}^q }^\frac{2}{q} \lesssim \sup_{z \in \ZL} \p{ \frac{|z|}{R} + 1}^{-\alpha_2} \int_{B_{\sqrt{d}}(z)} \a{\nabla \phi + e}^2,
\end{equation}
where $\alpha_2>0$ and $1 < q < 2$ depend on $d$ and $\lambda$. 

Here comes the argument: Given a point on the integer lattice $z\in\mathbb{Z}^d \cap \T$, 
we denote by $A_z$ an arbitrary coefficient field that agrees with $A$ outside of $B_{\sqrt{d}}(z)$. We note that the function $\phi(A_z;\cdot)-\phi(A;\cdot)$ 
satisfies
\begin{equation}\label{L2.3}
-\nabla\cdot A\nabla(\phi(A_z;\cdot)-\phi(A;\cdot))=\nabla\cdot(A_z-A)(\nabla\phi(A_z;\cdot)+e).
\end{equation}
To estimate the $l^q(\ZL)$-norm on the left-hand side of \eqref{eq77} we will consider a discrete field $\{\omega_z\}_{z\in\mathbb{Z}^d\cap\T}$ and use definition for the $l^q$ norm by duality. Given $\{\omega_z\}_{z\in\mathbb{Z}^d\cap\T}$, we consider the 
function $u$ and the vector field $g$ defined through
\begin{eqnarray*}\nonumber
u(x)&:=&\sum_{z\in\mathbb{Z}^d\cap\T}\omega_z(\phi(A_z;x)-\phi(A;x)),\\
g(x)&:=&\sum_{z\in\mathbb{Z}^d\cap\T}\omega_z(A_z(x)-A(x))(\nabla\phi(A_z;x)+e),
\end{eqnarray*}
and note that (\ref{L2.3}) translates into $-\nabla\cdot A\nabla u=\nabla\cdot g$. We combine assumption \eqref{66} with \eqref{e74} from Step 6 to obtain
\begin{equation}\label{102}
 \a{ F(\nabla u_l) }^2 \overset{\eqref{66}}\lesssim \int_{B_{2R}} \a{\nabla u}^2 \overset{\eqref{e74}}\lesssim \int_{\T} \p{ \frac{|x|}{2R}+1 }^{-\alpha} |g|^2 \lesssim \int_{\T} \p{ \frac{|x|}{R}+1 }^{-\alpha} |g|^2. 
\end{equation}
By the linearity of $F$, the left-hand side can be written as
\begin{equation}\nonumber
\a{ F(\nabla u_l)}^2=\a{\sum_{z\in\ZL}\omega_z(F(\nabla\phi_l(A_z;\cdot)+e_l)-F(\nabla\phi_l(A;\cdot)+e_l))}^2.
\end{equation}
Since $\a{A_z-A}\le2$ is supported in $B_{\sqrt{d}}(z)$ and since balls $B_{\sqrt{d}}(z)$ have finite overlap (which depends only on $d$), the right-hand side of \eqref{102} is estimated by
\begin{align*}
\int_{\T} \p{ \frac{|x|}{R}+1}^{-\alpha}|g|^2 
&= \int_{\T} \p{ \frac{|x|}{R}+1}^{-\alpha}\a{ \sum_{z\in\mathbb{Z}^d\cap\T}\omega_z(A_z(x)-A(x))(\nabla\phi(A_z;x)+e) }^2 
\\ &\lesssim
\sum_{z\in\ZL} \p{ \omega_z^2\int_{B_{\sqrt{d}}(z)}\p{\frac{|x|}{R}+1}^{-\alpha}\a{\nabla\phi(A_z;x)+e}^2 }
\\ &\lesssim \sum_{z\in\ZL}\omega_z^2\p{\frac{|z|}{R}+1}^{-\alpha}\int_{B_{\sqrt{d}}(z)}|\nabla\phi(A;\cdot)+e|^2,
\end{align*}
where in the last step we used $R \ge 1$ and 
$$\int_{B_{\sqrt{d}}(z)}|\nabla\phi(A_z;\cdot)-\nabla\phi(A;\cdot)|^2\le\int_{\T}|\nabla\phi(A_z;\cdot)-\nabla\phi(A;\cdot)|^2
\lesssim \int_{B_{\sqrt{d}}(z)}|\nabla\phi(A;\cdot)+e|^2,$$ 
where the second inequality follows from testing $-\nabla\cdot A_z\nabla(\phi(A_z;\cdot)-\phi(A;\cdot))=\nabla\cdot(A_z-A)(\nabla\phi(A;\cdot)+e)$ with $\phi(A_z;\cdot) - \phi(A;\cdot)$, and using \eqref{ellipticity} and H\"older's inequality. 

Now we split $\alpha = \alpha_1 + \alpha_2$, $\alpha_1 > 0$, $\alpha_2 > 0$, and based on the previous estimate we get
\begin{multline}\nonumber
 \int_{\T} \p{ \frac{|x|}{R}+1}^{-\alpha}|g|^2 
\\
\begin{aligned}
& \lesssim \p{ \sum_{z\in\ZL}\omega_z^2 \p{\frac{|z|}{R}+1}^{-\alpha_1 } } \sup_{z \in \ZL} \p{\frac{|z|}{R}+1}^{-\alpha_2}\int_{B_{\sqrt{d}}(z)}|\nabla\phi(A;\cdot)+e|^2
\\ 
& \lesssim \p{ \sum_{z\in\ZL}\omega_z^{2p} }^\frac{1}{p}\!\!\! \p{ \sum_{z \in \ZL} \p{\frac{|z|}{R}+1}^{-\alpha_1p' } }^\frac{1}{p'}\!\!\!\!\!\!\! \sup_{z \in \ZL} \p{\frac{|z|}{R}+1}^{-\alpha_2}\!\!\!\int_{B_{\sqrt{d}}(z)}|\nabla\phi(A;\cdot)+e|^2,
\end{aligned}
\end{multline}
for any $p,p' \ge 1$, $\frac{1}{p} + \frac{1}{p'} = 1$. We choose $p' > \frac{d}{\alpha_1}$ so that the second sum is finite,  and obtain
\begin{align}\nonumber
 \int_{\T} \p{ \frac{|x|}{R}+1}^{-\alpha}|g|^2 
&\lesssim \p{ \sum_{z\in\ZL}\omega_z^{2p} }^\frac{1}{p} \sup_{z \in \ZL} \p{\frac{|z|}{R}+1}^{-\alpha_2}\int_{B_{\sqrt{d}}(z)}|\nabla\phi(A;\cdot)+e|^2.
\end{align}
Since $\{\omega_z\}_{z\in\ZL}$ was arbitrary, by duality
\begin{multline}\nonumber
\p{ \sum_{z\in\ZL}|F(\nabla\phi_l(A_z;\cdot)+e_l)-F(\nabla\phi_l(A;\cdot)+e_l))|^q }^\frac{2}{q}
\\
\lesssim \sup_{z \in \ZL} \p{\frac{|z|}{R}+1}^{-\alpha_2}\int_{B_{\sqrt{d}}(z)}|\nabla\phi(A;\cdot)+e|^2,
\end{multline}
where $\frac{1}{2p} + \frac{1}{q} = 1$. 
Recalling definition of $\osc_{B_{\sqrt{d}}(z)} F$, the above implies \eqref{eq77}.


\mbox{}\\{\bf Step 7.} Compactness. There exists $N=N(d,\lambda)$ such that for any radius $1 \le R < \infty$ there exist $F_1,\ldots,F_N$, linear (deterministic) functionals on $L^2(B_{2R})$, with the following properties:
\begin{itemize}
 \item They satisfy \eqref{66}, i.e., they are bounded in the sense $\a{ F_k(\nabla f) }^2 \lesssim \int_{B_{2R}} \a{ \nabla f }^2$ for all $f \in W^{1,2}(B_{2R})$ and $k = 1,\ldots,N$;
 \item Together, they are strong enough to guarantee 
 \begin{equation}\label{67}
  \en{ \p{ \int_{B_R} \a{ \nabla \phi + e}^2 }^p } \le C(d,n,\lambda)^p \max_{
\genfrac{}{}{0pt}{}{k=1,\ldots,N}{l=1,\ldots,n}} \en{ \a{ F_k(\nabla \phi_l + e_l)}^{2p}}.
 \end{equation} 
\end{itemize}
\noindent
Here comes the argument. For $k \in \mathbb{N}$, let $\hat v_k$ be the $k$-th Neumann eigenfunction in the ball $B_{2}$ with $\mu_k$ being the corresponding eigenvalue, i.e., $-\Delta \hat v_k = \mu_k \hat v_k \textrm{ in } B_{2}$, $\frac{\partial \hat v_k}{\partial \nu} = 0 \textrm{ on } \partial B_{2}$, $\|\hat v_k\|_{L^2(B_{2})} = 1$, and $0 = \mu_0 < \mu_{1} \le \ldots$. We claim that the choice
\begin{equation}\nonumber
 F_k(f) := R \int_{B_{2R}} f \frac{\nabla v_k}{\mu_k}, \qquad k \ge 1
\end{equation}
where $v_k(x) := R^{-d/2} \hat v_k(x/R)$ for $x \in B_{2R}$, satisfies both \eqref{66} and \eqref{67} provided $N$ is chosen large enough. 

Functions $\{ v_k \}_{k=0}^\infty$ form an orthonormal basis in $L^2(B_{2R})$ and satisfy $-\Delta v_k = \frac{\mu_k}{R^2} v_k$. Given $u \in L^2(\T)$, we write $u = \sum_{k=0}^\infty c_k v_k$, and get 
for any $N \in \mathbb{N}$ 
\begin{equation}\label{77}
 \int_{B_{2R}} \a{ u - \bar u }^2 = \sum_{k=1}^N c_k^2 + \sum_{k=N+1}^\infty c_k^2 \le \sum_{k=1}^N \a{ \int_{B_{2R}} (u-\bar u) v_k }^2 + \frac{R^2}{\mu_{N+1}} \int_{B_{2R}} \a{ \nabla u }^2,
\end{equation}
where $\bar u$ denotes the average value of $u$ over $B_{2R}$. We will use this inequality for $u_l(x) := \phi_l(x) + x \cdot e_l$. Taking the $p$-th power and ensemble average in the previous relation, and by using Young's inequality we get
\begin{multline}\label{68}
 \en{ \p{ \int_{B_{R}} \a{ \nabla \phi_l + e_l }^2 }^p } = \en{ \p{ \int_{B_{R}} \a{ \nabla u_l }^2 }^p } \overset{\eqref{cac}}
\le \en { \p {\frac{C}{\lambda R^2} \int_{B_{2R}} \a{ u_l - \bar u_l }^2 }^p } 
\\ \overset{\eqref{77}}\le C^p \lambda^{-p} R^{-2p} (2N)^{p-1} \sum_{k=1}^N \en{ \a{ \int_{B_{2R}} (u_l-\bar u_l) v_k }^{2p} } + \frac{C}{2} \p{ \frac{2}{\lambda \mu_{N+1}}}^p \en {\p{ \int_{B_{2R}} \a{ \nabla u_l }^2}^p }.
\end{multline}
Since $\int_{B_{2R}} \nabla f \frac{\nabla v_k}{\mu_k} = \frac{1}{R^2} \int_{B_{2R}} f v_k$ for any $f \in W^{1,2}(B_{2R})$, we see that $\int_{B_{2R}} (u_l-\bar u_l) v_k = R F_k(\nabla \phi_l + e_l)$. Therefore
\begin{multline}\label{961}
 \en{ \p{ \int_{B_{R}} \a{ \nabla \phi + e }^2 }^p } \le C^p \sum_{l=1}^n \en{ \p{ \int_{B_{R}} \a{ \nabla \phi_l + e_l }^2 }^p } 
\\ \overset{\eqref{68}}\le C^p (2N)^{p-1} \sum_{l=1}^n \sum_{k=1}^N \en{ \a{ F_k\p{\nabla \phi_l + e_l} }^{2p} } + C^p \mu_{N+1}^{-p} \sum_{l=1}^n \en {\p{ \int_{B_{2R}} \a{ \nabla \phi_l + e_l }^2}^p }.
\end{multline}
We have $\en {\p{ \int_{B_{2R}} \a{ \nabla \phi_l + e_l }^2}^p } \le \en {\p{ \int_{B_{2R}} \a{ \nabla \phi + e }^2}^p }$, and so the last term from \eqref{961} satisfies
\begin{multline}\label{fourier}
 C^p \mu_{N+1}^{-p} \sum_{l=1}^n \en {\p{ \int_{B_{2R}} \a{ \nabla \phi_l + e_l }^2}^p } 
\\
\le C^p \mu_{N+1}^{-p} n \en {\p{ \int_{B_{2R}} \a{ \nabla \phi + e }^2}^p } \overset{\eqref{p.14}}\le C^p \mu_{N+1}^{-p} n \en {\p{ \int_{B_{R}} \a{ \nabla \phi + e }^2}^p }.
\end{multline}
Since $\mu_N \to \infty$ as $N \to \infty$, for large enough $N = N(d,n,\lambda)$ the prefactor in front of the last term in \eqref{fourier} satisfies $C^p \mu_{N+1}^{-p} n \le 1/2$. Therefore, for such $N$ we can absorb the last term in \eqref{961} into left-hand side and obtain
\begin{equation}\nonumber
 \en{ \p{ \int_{B_{R}} \a{ \nabla \phi + e }^2 }^p } \le C^p \sum_{l=1}^n \sum_{k=1}^N \en{ \a{ F_k\p{\nabla \phi_l + e_l} }^{2p} } \le C^p \max_{\genfrac{}{}{0pt}{}{k=1,\ldots,N}{l=1,\ldots,n}} \en{ \a{ F_k(\nabla \phi_l + e_l)}^{2p}},
\end{equation}
which is exactly \eqref{67}.

It remains to show that all $F_k$ satisfy upper bound \eqref{66}. By definition of $F_k$ and $v_k$
\begin{equation}\nonumber
 \a{ F_k(\nabla f) } = \a{ R \int_{B_{2R}} \nabla f \frac{\nabla v_k}{\mu_k} } \le R \p{ \int_{B_{2R}} \a{ \nabla f }^2 }^\oh \p{ \int_{B_{2R}} \frac{\a{\nabla v_k}^2}{\mu_k^2} }^\oh = \mu_k^{-1/2} \p{ \int_{B_{2R}} \a{ \nabla f }^2 }^\oh,
\end{equation}
where we used that $\int_{B_{2R}} \nabla v_k \nabla v_k = \int_{B_{2R}} v_k (-\Delta v_k) = \frac{\mu_k}{R^2} \int_{B_{2R}} v_k^2 = \frac{\mu_k}{R^2}$. Since $\mu_k \gtrsim 1$, we get \eqref{66}.

\mbox{}\\{\bf Step 8.} Proof of \eqref{2moment}. Since all $F_k$ constructed in the previous step satisfy \eqref{66}, they also satisfy \eqref{eq77}. Then taking the p-th power and expectation in \eqref{eq77} gives
\begin{equation}\label{106}
\begin{aligned}
 \en{ \p{ \sum_{z \in \ZL} \p{ \osc_{B_{\sqrt{d}}(z)} F_k(\nabla \phi_l + e_l)}^q }^{\frac{2p}{q}} }
&\lesssim \en{ \sup_{z \in \ZL} \p{ \frac{|z|}{R} + 1}^{-p\alpha_2} \p{ \int_{B_{\sqrt{d}}(z)} \a{\nabla \phi + e}^2}^p }
\\ &\lesssim \sum_{z \in \ZL} \p{ \frac{|z|}{R} + 1}^{-p\alpha_2} \en{ \p{ \int_{B_{\sqrt{d}}(z)} \a{\nabla \phi + e}^2}^p }
\\ &= \sum_{z \in \ZL} \p{ \frac{|z|}{R} + 1}^{-p\alpha_2} \en{ \p{ \int_{B_{\sqrt{d}}} \a{\nabla \phi + e}^2}^p },
\end{aligned}
\end{equation}
where we have used stationarity (see \eqref{p.21} from Step 1) in the last equality. We choose $p$ large enough so that 
\begin{equation}\nonumber
 p\alpha_2 > d.
\end{equation}
Then for $R \ge 1$ we have 
\begin{equation}\nonumber
 \sum_{z \in \ZL} \p{ \frac{|z|}{R} + 1}^{-p\alpha_2} \lesssim R^d. 
\end{equation}
On the other hand, by \eqref{e72} in Step 4 we have for $R \ge \sqrt{d}$
\begin{equation}\nonumber
 \int_{B_{\sqrt{d}}} |\nabla \phi + e|^2 \lesssim R^{-\alpha} \int_{B_R} |\nabla \phi + e|^2.
\end{equation}
Therefore \eqref{106} turns into
\begin{equation}\nonumber
 \en{ \p{ \sum_{z \in \ZL} \p{ \osc_{B_{\sqrt{d}}(z)} F_k(\nabla \phi_l + e_l)}^q }^\frac{2p}{q} } \lesssim R^{d-\alpha_2 p} \en{ \p{ \int_{B_R} \a{\nabla \phi + e}^2}^p }.
\end{equation}
Now we want to use the following $L^p$ version of the Spectral Gap inequality 
\begin{equation}\label{SGp}
 \en{ \a{\zeta - \en{\zeta}}^{2p} } \le C(\rho,p) \en{ \p{ \sum_{z\in\ZL} \p{ \osc_{B_\d(z)} \zeta}^q }^{\frac{2p}{q}} }
\end{equation}
for random variables $\zeta \in L^{2p}$ for which the right-hand side makes sense (see \cite[Lemma 11]{GNO2} for a similar inequality). 
For convenience of the reader we show in the Appendix how \eqref{SG} implies \eqref{SGp}. Using \eqref{SGp} for $\zeta = F_k(\nabla \phi_l + e_l)$ we get
\begin{align*}
 \en{ \a{F_k(\nabla \phi_l + e_l)}^{2p}} 
&\le C(d,n,\lambda,\rho,p) \p{ \en{ F_k^2(\nabla \phi_l + e_l) }^p + R^{d-\alpha_2 p}\en{ \p{ \int_{B_R} \a{\nabla \phi + e}^2}^p } }
\\ &\overset{\eqref{66}}{\le} C(d,n,\lambda,\rho,p) \p{ \en{ \int_{B_{2R}} |\nabla \phi_l + e_l|^2 }^p + R^{d-\alpha_2 p}\en{ \p{ \int_{B_R} \a{\nabla \phi + e}^2}^p } }
\\ &\le C(d,n,\lambda,\rho,p) \p{ R^{dp} + R^{d-p\alpha_2}\en{ \p{ \int_{B_R} \a{\nabla \phi + e}^2}^p }}.
\end{align*}
Plugging this into \eqref{67} implies
\begin{multline}\nonumber
 \en{ \p{ \int_{B_R} \a{ \nabla \phi + e}^2 }^p } \le C(d,n,\lambda,\rho,p) \max_{\genfrac{}{}{0pt}{}{k=1,\ldots,N}{l=1,\ldots,n}} \en{ \a{ F_k(\nabla \phi_l + e_l)}^{2p}} 
\\
\le C(d,n,\lambda,\rho,p) \p{ R^{dp} + R^{d-p\alpha_2}\en{ \p{ \int_{B_R} \a{\nabla \phi + e}^2}^p } }.
\end{multline}
Since $d-p\alpha_2 < 0$, we can choose $R=R_0(d,n,\lambda,\rho,p)$ large enough in order to absorb the last term into left-hand side to conclude
\begin{equation}\nonumber
  \en{ \p{ \int_{B_R} \a{ \nabla \phi + e}^2 }^p } \le C(d,n,\lambda,\rho,p)
\end{equation}
for $p$ sufficiently large. 

\mbox{}\\{\bf Step 9.} Similarly as in the proof of \eqref{1error}, Spectral Gap estimate \eqref{SG} for $\zeta := e_0 \cdot A_{hom} e_1$ and \eqref{2moment} imply \eqref{2error}. 

Indeed, it follows from \eqref{var} (and discussion afterwards) that for $A \in \Omega$ one has
\begin{equation}\nonumber
 L^d \osc_{B_{\sqrt{d}}(z)} e_0 \cdot A_{hom}(A)e_1 \lesssim \int_{B_{\sqrt{d}}(z)} \a{ \nabla \phi'_0 + e_0 }^2 + \a{ \nabla \phi_1 + e_1 }^2.
\end{equation}
Here we used notation used in \eqref{var}. Hence, \eqref{SG} with $q=2$ used for $\zeta(A) := e_0 \cdot A_{hom}(A)e_1$ implies
\begin{align*}
\en{ \left(e_0\cdot A_{hom}e_1-\en{ e_0\cdot A_{hom}e_1}\right)^2} &\lesssim \en{ \sum_{z\in\ZL} \p{ \osc_{B_{\sqrt{d}}} e_0A_{hom}e_1 }^2 }
\\ &\lesssim L^{-2d}  \sum_{z\in\ZL} \en{ \p{ \int_{B_{\sqrt{d}}(z)} \a{ \nabla \phi'_0 + e_0}^2 + \a{\nabla \phi_1 + e_1}^2 }^2 }
\\ &\!\!\overset{\eqref{p.21}}\lesssim L^{-d} \en{ \p{ \int_{B_{\sqrt{d}}} \a{ \nabla \phi'_0 + e_0}^2 }^2 + \p{ \int_{B_{\sqrt{d}}} \a{\nabla \phi_1 + e_1}^2 }^2 } 
\\ &\!\!\overset{\eqref{2moment}}\le C(d,n,\lambda,\rho) L^{-d}.
\end{align*}

}\end{proof}

\section{Proof of the $L^p$-version of the Spectral Gap estimate}

The $L^p$ version of the Spectral Gap estimate is a consequence of the standard Spectral Gap estimate, and was used (with $q=2$) previously in several works with suboptimal dependence on $p$ \cite{GNO2,AMN}. Recently, a sharp version in terms of $p$ was needed in \cite{GNO4}.  Since the case $q \neq 2$ did not appear previously in the literature, for the convenience of the reader we present the proof. Since we do not aim to get optimal $p$-dependence of the constants, we follow a simpler approach presented in \cite{GNO2,AMN} (in order to get the optimal $p$-dependence, the proof in \cite{GNO4} requires some new ideas). 


\begin{proof}[Proof of \eqref{SGp}.] Given $\zeta \in L^{2p}(\Omega)$ with $\en{\zeta} = 0$ we want to prove
\begin{equation}\label{113}
 \en{ \a{\zeta - \en{\zeta}}^{2p} } \lesssim \en{ \p{ \sum_{z\in\ZL} \p{ \osc_{\sqrt{d}} \zeta}^q }^{\frac{2p}{q}} },
\end{equation}
provided 
\begin{equation}\label{114}
 \en{ \p{ \zeta - \en{\zeta} }^{2} } \le \frac{1}{\rho} \en{ \p{ \sum_{z\in\ZL} \p{ \osc_{\sqrt{d}} \zeta}^q }^{\frac{2}{q}} }
\end{equation}
holds for any $\zeta \in L^2(\Omega)$. Here $\lesssim$ stays for $\le$ up to a constant depending on $p,q,\rho$. 

\mbox{}\\ {\bf Step 1}. We claim 
\begin{equation}\label{115}
 \p{ \osc_{\sqrt{d}} f^p }^q \lesssim f^{q(p-1)} \p{ \osc_{\sqrt{d}} f }^q + \p{ \osc_{\sqrt{d}} f }^{pq}.
\end{equation}

Indeed, from the elementary real-variable estimate
\begin{equation}\nonumber
 \a{ \zeta^p - \widetilde{\zeta}^p } \lesssim \a{ \zeta } ^{p-1} \a{ \zeta - \widetilde{\zeta} } + \a{ \zeta - \widetilde{\zeta} }^p,
\end{equation}
we get by definition of $\osc$
\begin{equation}\nonumber
 \osc_{\sqrt{d}} \zeta^p  \lesssim \a{\zeta}^{(p-1)} \osc_{\sqrt{d}} \zeta + \p{ \osc_{\sqrt{d}} \zeta }^{p}.
\end{equation}
We take $q$-th power ($q \ge 1$) of the previous relation, and use Young's inequality to obtain \eqref{115}. 

\mbox{}\\{\bf Step 2.} Fix $\zeta \in L^{2p}(\Omega)$. W.l.o.g. we can assume $\en{\zeta} = 0$. Then using \eqref{114} for $\zeta^p$ we get 
\begin{eqnarray*}
 \en{ \a{\zeta}^{2p} } &\le& \en{ \a{\zeta}^p }^2 + \frac{1}{\rho} \en{ \p{ \sum_{z\in\ZL} \p{ \osc_{\sqrt{d}} \zeta^p }^q }^\frac{2}{q} } 
\\
&\overset{\eqref{115}}\lesssim& \en{ \a{\zeta}^p }^2 + \en{ \p{ \sum_{z\in\ZL} \a{\zeta}^{q(p-1)} \p{ \osc_{\sqrt{d}} \zeta}^q   + \p{ \osc_{\sqrt{d}} \zeta }^{pq} }^\frac{2}{q} }
\\ 
&\overset{\textrm{Young's}}\lesssim& \en{ \a{\zeta}^p }^2 + \en{ \a{\zeta}^{2(p-1)}\p{ \sum_{z\in\ZL} \p{ \osc_{\sqrt{d}} \zeta}^q }^\frac{2}{q}} + \en{ \p{ \sum_{z\in\ZL} \p{ \osc_{\sqrt{d}} \zeta }^{pq} }^\frac{2}{q} }
\end{eqnarray*}
We now estimate three terms on the right-hand side separately. For the last term we appeal to the discrete $l^p$-$l^1$ estimate. By H\"older's inequality with exponents $\p{\frac{p}{p-1},p}$ and Young's inequality, the middle term is estimated by $\frac{1}{4} \en{ \a{ \zeta}^{2p} } + C \en{ \p{ \sum_{z\in\ZL} \p{ \osc_{\sqrt{d}} \zeta}^q }^{\frac{2p}{q}} }$. 
If $p \le 2$, by Jensen's inequality the first term is bounded by $\en{ \a{ \zeta }^2 }^p$. In the case $p > 2$, we combine H\"older's and Young's inequality to get $\en{ \a{\zeta}^p }^2 \le C\en{ \a{\zeta}^2 }^p + \frac{1}{4} \en{ \a{ \zeta }^{2p} }$. Finally, since we assumed $\en {\zeta} = 0$, \eqref{114} and Young's inequality imply $\en{ \a{ \zeta }^2 }^p \lesssim \en{ \p{ \sum_{z\in\ZL} \p{ \osc_{\sqrt{d}} \zeta}^q }^{\frac{2p}{q}} }$. Summing these estimates together yields \eqref{113}, which concludes the proof of \eqref{SGp}. 
\end{proof}

\bibliographystyle{amsplain}
\bibliography{bella}

%
%
%
%
%

\end{document}